\documentclass{article}
\usepackage[utf8]{inputenc}
\usepackage{xcolor,ulem}
\usepackage{authblk}

\usepackage{amsfonts,amsmath}
\usepackage{amsthm}
\usepackage[polish,main=english]{babel}
\usepackage{array}
\usepackage{hyperref}
\usepackage{bbm} 
\usepackage{graphicx}
\usepackage{mathtools}
\usepackage{algorithm}
\usepackage{algorithmic}
\usepackage{enumitem}
\usepackage{caption}
\usepackage{subcaption}
\usepackage{cancel}

\hyphenation{expli-cit-ly geo-gebra or-tho-go-nal con-tai-ning re-fe-ren-ces supp-le-ment re-ar-range-ment e-xists va-ria-bles e-qua-li-zing pat-tern}
\setlength\parindent{0pt}

\DeclarePairedDelimiterX{\inp}[2]{\langle}{\rangle}{#1, #2}

\newtheorem{theorem}{Theorem}[section]
\newtheorem{corollary}{Corollary}[theorem]
\newtheorem{remark}{Remark}[theorem]
\newtheorem{fact}{Fact}[theorem]
\newtheorem{example}{Example}[theorem]

\newtheorem{definition}{Definition}[theorem]
\newtheorem{proposition}{Proposition}[theorem]
\newtheorem{lemma}[theorem]{Lemma}

\usepackage{todonotes}
\usepackage{marginnote}

\newcommand{\lefttodo}[2][]{{%
		\let\marginpar\marginnote
		\reversemarginpar
		\renewcommand{\baselinestretch}{0.8}%
		\todo[#1]{#2}}}

%https://tex.stackexchange.com/questions/482703/write-several-systems-of-linear-equations-in-one-line:

\newcommand{\ols}{{\rm OLS}}
\newcommand{\slope}{{\rm SLOPE}}

\newcommand{\bbeta}{\mbox{\boldmath $\beta$}}
\newcommand{\bY}{\mbox{\boldmath $Y$}}
\newcommand{\bb}{\mbox{\boldmath $b$}}
\newcommand{\bX}{\mbox{\boldmath $X$}}

\newcommand{\bLambda}{\mbox{\boldmath $\Lambda$}}
\newcommand{\beps}{\mbox{\boldmath $\varepsilon$}}
\newcommand{\bpi}{\mbox{\boldmath $\pi$}}
\newcommand{\bI}{\mbox{\boldmath $I$}}
\newcommand{\bnull}{\mbox{\boldmath $0$}}

\newcommand{\bC}{\mbox{\boldmath $C$}}

\newcommand{\bU}{\mbox{\boldmath $U$}}
\newcommand{\bS}{\mbox{\boldmath $S$}}
\newcommand{\bz}{\mbox{\boldmath $z$}}

\newcommand{\Then}{\Rightarrow}

\newcommand{\sign}{{\rm sign}}

\newcommand{\pattern}{{\mbox{\boldmath{$patt$}}}}

\newcommand{\betaLASSO}{\hat\beta^{\rm LASSO}}

\newcommand{\betaSLOPE}{\hat\beta^{\rm SLOPE}}

\renewcommand{\P}{\mathbb{P}}
\renewcommand{\epsilon}{\varepsilon}
\newcommand{\eps}{\varepsilon}
\newcommand{\lt}{\left}
\newcommand{\rt}{\right}

\newcommand{\R}{\mathbb{R}}
\newcommand{\Z}{\mathbb{Z}}

\textwidth 140mm

\begin{document}
	\title{Pattern recovery and signal denoising by SLOPE when the design matrix is orthogonal
		\footnote{The authors want to thank Ma\l{}gorzata Bogdan, Patrick Tardivel, Wojciech Rejchel and Tomasz \.Zak for references, comments and discussion. The authors are thankful to anonymous referee for simplifying the proof of Lemma~\ref{lemat 1}. First Author was supported by a French Government Scholarship. %This research was carried out while the first author spent the summer semester of $2021$ at Laboratoire de Math\'{e}matiques LAREMA and at Universit\'{e} de Bourgogne. 
			Research of the first and the second author was supported by Centre Henri Lebesgue, program ANR-11-LABX-0020-0.
	}}
	
	\author[1,2]{Tomasz Skalski \thanks{corresponding author, email: Tomasz.Skalski@pwr.edu.pl}}
	\author[2]{Piotr Graczyk \thanks{email: graczyk@univ-angers.fr}}
	\author[3]{Bartosz Ko\l{}odziejek \thanks{email: b.kolodziejek@mini.pw.edu.pl}}
	\author[1]{Maciej Wilczy\'nski \thanks{email: Maciej.Wilczynski@pwr.edu.pl}}
	\affil[1]{
		Faculty of Pure and Applied Mathematics, Wroc\l{}aw University of Science and Technology, Wybrze\.{z}e Wyspia\'{n}skiego 27, 50-370 Wroc\l{}aw, Poland}
	\affil[2]{
		Laboratoire de Math\'ematiques LAREMA, Universit\'e d’Angers, France}
	\affil[3]{
		Faculty of Mathematics and Information Science, Warsaw University of Technology, ul. Koszykowa 75, 00-662 Warsaw, Poland}
	\date{\today}

	\maketitle
	
	%\begin{document}
	\begin{abstract}
		Sorted $\ell_1$ Penalized Estimator (SLOPE) is a relatively new convex regularization method for fitting high-dimensional regression models.
		SLOPE allows to reduce the model dimension by shrinking some estimates of the regression coefficients completely to zero or by equating the absolute values of some nonzero estimates of these coefficients.
		%SLOPE allows to reduce the model dimension by nullifying some of the regression coefficients and by equalizing some of nonzero coefficients.
		This allows to identify situations where some of~true regression coefficients are equal. In this article we will introduce the SLOPE pattern, i.e., the set of relations between the true regression coefficients, which can be identified by SLOPE.
		We will also present new results on the strong consistency of SLOPE estimators and on~the~strong consistency of pattern recovery by~SLOPE when the design matrix is orthogonal and illustrate advantages of~the~SLOPE clustering in the context of high frequency signal denoising.
	\end{abstract}

	%\tableofcontents 
	\newpage
	\section{Introduction}
	%%%%%%%%%%%%%%%%%%%%%%%%%%%%%%%%%%%%%%%%
	\subsection{Introduction and motivations}
	The Linear Multiple Regression concerns the model $\bY=\bX\bbeta+\beps$, where $\bY\in\R^n$ is~an~output vector, $\bX\in\R^{n\times p}$ is a~fixed design matrix, $\bbeta\in\R^p$ is an unknown vector of predictors and~$\beps\in\R^n$ is a~noise vector. The primary goal is to estimate $\bbeta$. In the low-dimensional setting, i.e., when the number of predictors $p$ is not larger than the number of explanatory variables $n$ and $\bX$ is of full rank, the ordinary least squares estimator $\hat\bbeta^{OLS}$ has an exact formula $\hat\bbeta^{OLS}=(\bX'\bX)^{-1}\bX'\bY$. %However, where $p>n$, the OLS estimate is not unique unless assuming more properties of $\hat\beta$. 
	For practical reasons there is~an~urge to~avoid the~high-dimensionality curse, therefore we want the estimate to be sparse, i.e., to~be~descriptible by~a~smaller number of parameters. Several solutions were proposed to~deal~with such problem. One of them, the Least Absolute Shrinkage and Selection Operator (LASSO~\cite{chen, tibshirani1996regression}) involves penalizing the residual sum of~squares $\|\bY-\bX\hat\bbeta\|_{2}^{2}$ with an $\ell_1$ norm of $\hat\bbeta$ multiplied by~a~tuning parameter $\lambda$:
	\begin{equation*}
		\hat\bbeta^{LASSO} := \underset{{\mathbf b}\in\R^p}{\arg\min} \left[\frac12\|\bY-\bX\bb\|^{2}_{2} + \lambda \|\bb\|_1\right].
	\end{equation*}
	The LASSO estimator is not unbiased, but is a shrinkage estimator which shrinks some \mbox{\boldmath{$\betaLASSO_j$}} completely to zero, resulting in a sparser estimate. In the case of $\bX$ being an~orthogonal matrix, i.e. $\frac{1}{n}\bX'\bX=\bI_p$, the exact formula for $\hat\bbeta^{LASSO}$ introduced by Tibshirani~\cite{tibshirani1996regression} is based on $\hat\bbeta^{OLS}$:
	%The LASSO estimator is not unbiased, but is a shrinkage estimator which reduces some of the coefficients of $\bbeta$ to zero, resulting in a sparser estimate. In the case of $\bX$ being an~orthogonal matrix, i.e. $\frac{1}{n}\bX'\bX=\bI_p$, the exact formula for $\hat\bbeta^{LASSO}$ introduced by Tibshirani~\cite{tibshirani1996regression} is based on $\hat\bbeta^{OLS}$:
	$$
	\hat\beta^{LASSO}_i = \sign(\hat\beta^{OLS}_i)\max\{|\hat\beta^{OLS}_i|-\lambda, 0\}.
	$$
	Another approach to reduce the dimensionality is the Sorted $\ell_1$ Penalized Estimator\\ (SLOPE~\cite{MBtheory,MBslope,zeng2014}), which not only generalizes the LASSO method, but also allows to clusterize the similar coefficients of $\bbeta$. In SLOPE, $\ell_1$-norm is replaced by its sorted version $J_{\mathbf \Lambda}$, which depends on the tuning vector $\bLambda=(\lambda_1,\ldots,\lambda_p)\in\R^p$, where $\lambda_1 \ge \ldots \ge \lambda_p \ge 0$:
	$$
	J_{\mathbf \Lambda}(\bbeta):=\sum\limits_{i=1}^{p} \lambda_i |\bbeta|_{(i)},
	$$
	where $\{|\bbeta|_{(i)}\}_{i=1}^{p}$ is a decreasing permutation of absolute values of $\bbeta_1,\ldots,\bbeta_p$:%, namely
	\begin{equation*}
		\hat\bbeta^{SLOPE} := \operatorname*{arg\,min}_{{\mathbf b}\in\R^p} \left[\frac12\|\bY-\bX \bb\|^{2}_{2} + J_{\mathbf \Lambda}(\bb)\right].
	\end{equation*}
	The case of $\bLambda$ being an arithmetic sequence was studied by~Bondell and~{Reich}~\cite{bondell2008} and called the Octagonal Shrinkage and Clustering Algorithm for~Regression (OSCAR). The special case of SLOPE with $\lambda_1 = \lambda_2 = \ldots = \lambda_p > 0$ is~LASSO. For $\bLambda=(0,\ldots,0)$ we obtain the OLS estimator.\\
	Clustering the predictors allows for additional dimension reduction by identifying variables with the same absolute values of regression coefficients.
	One may recently observe the rise of interest in methods, which cluster highly correlated %similar levels of
	predictors~\cite{bondell2009,gertheiss,majkanska,pokaretal,oelker,scope}. SLOPE is ideal for~this task, since it is capable to identify the low-dimensional structure, which is~called the~SLOPE pattern, defined by Schneider and Tardivel with the subdifferential of the SLOPE norm $J_{\mathbf \Lambda}$~\cite{geom1}. For~the~convention of this article we let $\sign(0)=0$. 
	As $k$ we will denote the number of~clusters of $\pattern(\bb)=(m_1,\ldots,m_p)'$ i.e., the number of nonzero components of $|\bb|$. 
	
	\begin{definition}[SLOPE pattern~\cite{modrec}]
		The SLOPE pattern is a function\\ $\pattern:\R^p \to \Z^p$ such that
		\begin{equation*}
			patt(\bb)_{i} = \sign(b_i) rank(|b_i|),
		\end{equation*}
		where $rank(|b_i|)\in\{1,2,\ldots,k\}$ is a rank of $|b_i|$ in a vector of distinct nonzero values among $\{|b_1|,\ldots,|b_p|\}$. We adopt the convention that $rank(0)=0$.
	\end{definition}
	As $\mathcal{M}_p$ we denote the set of all possible SLOPE patterns of $\bb\in\R^p$.
	\begin{fact}[Basic properties of SLOPE pattern~\cite{geom1}]
		\leavevmode
		\makeatletter
		\@nobreaktrue
		\makeatother
		\begin{enumerate}[label=(\alph*)]
			\item for every $1\le l\le\|\pattern(\bb)\|_\infty$ there exists $j$ such that $|patt(\bb)_j|=l$,
			\item $\mbox{\rm sign}(\pattern(\bb)) = \sign(\bb)$ (sign preservation),
			\item $|b_i|=|b_j|\Then|patt(\bb)_i|=|patt(\bb)_j|$ (cluster preservation),
			\item $|b_i|>|b_j|\Then|patt(\bb)_i|>|patt(\bb)_j|$ (hierarchy preservation).
		\end{enumerate}
	\end{fact}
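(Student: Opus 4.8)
The plan is to reduce all four statements to a single clean reformulation of the pattern map together with the elementary fact that a rank function is strictly monotone. First I would fix $\bb\in\R^p$ and list the distinct nonzero absolute values occurring among $b_1,\dots,b_p$ in increasing order as $0<v_1<\dots<v_k$, so that by definition $rank(v_m)=m$ for $1\le m\le k$ and $rank(0)=0$. The one computation that drives everything is that for each coordinate $i$ we have $|\pattern(\bb)_i|=|\sign(b_i)|\,rank(|b_i|)=rank(|b_i|)$: indeed if $b_i\ne 0$ then $|\sign(b_i)|=1$, while if $b_i=0$ both sides vanish by the convention $rank(0)=0$. Thus $|\pattern(\bb)_i|$ is exactly the rank of $|b_i|$, and $rank$ is by construction a strictly increasing function of the absolute value on $\{0,v_1,\dots,v_k\}$.

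With this in hand each property is immediate. For (b) I would note that when $b_i\ne 0$ the factor $rank(|b_i|)\ge 1>0$ is positive, so $\sign(\pattern(\bb)_i)=\sign(b_i)\,\sign(rank(|b_i|))=\sign(b_i)$, and when $b_i=0$ both signs are $0$. For (c), $|b_i|=|b_j|$ forces $rank(|b_i|)=rank(|b_j|)$ since $rank$ depends only on the absolute value, hence $|\pattern(\bb)_i|=|\pattern(\bb)_j|$ by the reformulation. For (d), $|b_i|>|b_j|$ gives $rank(|b_i|)>rank(|b_j|)$ by strict monotonicity of $rank$ (here $|b_i|>0$ equals some $v_m$ while $|b_j|$ is either $0$ or a strictly smaller $v_{m'}$), so $|\pattern(\bb)_i|>|\pattern(\bb)_j|$. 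Finally for (a), observe that $\|\pattern(\bb)\|_\infty=\max_i rank(|b_i|)=k$, and for each $1\le l\le k$ the value $v_l$ occurs as $|b_j|$ for some coordinate $j$, whence $|\pattern(\bb)_j|=rank(v_l)=l$; this is just surjectivity of $rank$ onto $\{1,\dots,k\}$.

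There is no real obstacle here: the content is entirely definitional, and the only thing to be careful about is the bookkeeping at zero, i.e.\ checking that the conventions $\sign(0)=0$ and $rank(0)=0$ make the identity $|\pattern(\bb)_i|=rank(|b_i|)$ valid across the board so that signs and ranks never interfere. Once the reformulation is stated, each of the four items is a one-line consequence, which is presumably why the result is recorded as a Fact rather than a Lemma.
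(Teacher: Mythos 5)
Your proof is correct. Note that the paper itself offers no proof of this Fact---it is simply quoted from the cited reference of Schneider and Tardivel---so there is nothing to compare against; your reduction of all four items to the identity $|\pattern(\bb)_i|=rank(|b_i|)$ together with the strict monotonicity and surjectivity of $rank$ onto $\{0,1,\ldots,k\}$ is exactly the standard definitional verification, and your care with the conventions $\sign(0)=0$ and $rank(0)=0$ is the only point where one could slip.
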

	\noindent
	\begin{example}
		$\pattern(4,0,-1.5,1.5,-4)=(2,0,-1,1,-2)$.
	\end{example}
	\begin{remark}[Subdifferential description of the SLOPE pattern~\cite{geom1}]
		{\;}\\
		Let $\bLambda=(\lambda_1,\ldots,\lambda_p)$ satisfy $\lambda_1 > \ldots > \lambda_p > 0$. Then
		$$
		\pattern(\bb_1)=\pattern(\bb_2) \iff \partial_{J_{\mathbf \Lambda}}(\bb_1) = \partial_{J_{\mathbf \Lambda}}(\bb_2),
		$$
		where $\partial_{f}(\bb)$ is a subdifferential of the function $f:\R^p\to\R$ at $\bb$, i.e.:
		$$
		\partial_{f}(\bb) = \{v\in\R^p: f(\bz)\ge f(\bb)+v'(\bz-\bb)\quad \forall \bz\in\R^p\}.
		$$
	\end{remark}
	The subdifferential approach may be applied to a wider class of penalizers being polyhedral gauges, cf.~\cite{geom2}.

	\begin{definition}[{Pattern recovery by SLOPE}]
		We say that the SLOPE estimator $\widehat{\bbeta}^{SLOPE}$ recovers the pattern of $\bbeta$ when 
		$$
		\pattern\left(\widehat{\bbeta}^{SLOPE}\right)=\pattern(\bbeta).
		$$
	\end{definition}
	The clustering properties of SLOPE have been studied before, cf.~\cite{bondell2008,nowak}, but the researchers consider strongly correlated predictors, which are being used in financial mathematics to~group the assets with respect to their partial correlation with the hedge fund return times series~\cite{kremer}. %In our article we present the clusterization by SLOPE when the predictors are orthogonal, i.e. $\frac{1}{n}\bX'\bX=\bI_p$.  
	In our article we decided to suppose the orthogonal design
	\begin{equation}\label{eq:design}
		\bX'\bX=n\mathbb{I}_p.
	\end{equation}
	%This is a classical and natural assumption in the case where n and p are fixed, cf.~\cite{tibshirani1996regression}. Moreover, in the asymptotic case, where $n \to \infty$ and $p$ is fixed, it is usually supposed that $\bX'\bX/n \to \bC>0$, cf.~\cite{zou}. In~\eqref{eq:design} the design matrix $\bX$ is orthogonal.
	This is a classical and natural assumption in the case where n and p are fixed, cf.~\cite{tibshirani1996regression}. Moreover, in the asymptotic case, where $n \to \infty$ and $p$ is fixed, it is usually supposed that $\bX'\bX/n \to \bC>0$, cf.~\cite{zhao,zou}. In~\eqref{eq:design} the design matrix $\bX$ is orthogonal.
	Then, the Euclidean norm of each $n$-dimensional column of $\bX$ equals $n$. If  it was $1$, the terms of $\bX$ would vanish to zero for large $n$, which is not natural.
	Such class of matrices is being widely used in signal analysis,~\cite{signalON,cds}. For general $\bX$ the~problem is~considered in our parallel article~\cite{modrec}.\\
	To study the properties of SLOPE we often use the closed unit ball $C_{\mathbf \Lambda}$ in the dual norm of $J_{\mathbf \Lambda}$, which was studied e.g. by Zeng and Figueiredo~\cite{zeng2014}. This dual ball is described explicitely as a signed permutahedron, see e.g.~\cite{negrinho,geom1}.
	\begin{equation}\label{dualball}
		C_{\mathbf \Lambda}=\{\bpi=(\pi_1,\pi_2,\ldots,\pi_p)\in\R^p: \sum_{j \leq i} |\pi|_{(j)} \leq 
		\sum_{j \leq i} \lambda_j: i=1,2,\ldots,p \}.
	\end{equation}
	
	In this article we prove novel results on the strong consistency of SLOPE both in estimation and in pattern recovery. We also introduce a new, based on minimaxity, approach to relations between $\hat\bbeta^{SLOPE}$ and $\hat\bbeta^{OLS}$.

	\subsection{Outline of the paper}
	In Section~\ref{sec:minimax} we derive the connections between $\hat\bbeta^{SLOPE}$ and $\hat\bbeta^{OLS}$ in the orthogonal design. We use the minimax theorem of Sion, cf.~\cite{aubin}.
	In Section \ref{subsec:ONslope} we focus on the properties of~$\hat\bbeta^{SLOPE}$. 
	We use the geometric interpretation of SLOPE to explain its ability to identify the SLOPE pattern and provide new theoretical results on the support recovery and clustering properties using a representation of SLOPE as a function of~the~ordinary least squares (OLS) estimator. Similar approach for LASSO was used by Ewald and Schneider, cf.~\cite{ewald}.

	%In Section  \ref{sec:asymp} we discuss asymptotic properties of the SLOPE estimators in the low-dimensio\-nal regression model in which $p$ is fixed and    the sample size $n$ tends to infinity. 
	
	To analyze the asymptotic properties of the SLOPE estimator, e.g. its consistency, we~have to assume that the sample size $n$ tends to infinity. Therefore, in~Section \ref{sec:asymp} we define a~sequence of linear regression models %To~be~more precise, for each $n \ge 1$ we consider a linear regression model 	
	$$\bY^{(n)}=\bX^{(n)}\bbeta + \beps_n^{(n)}.$$ 
	In this sequence, the response vector $\bY^{(n)} \in \R^{n}$, the design matrix $\bX^{(n)}\in\R^{n\times p}$ and~the~error term $\beps^{(n)}=(\eps_1^{(n)},\eps_2^{(n)}, \ldots,\eps_n^{(n)})' \in \R^n$ vary with $n$.

	%with the vector of observations $\bY^{(n)} \in \R^{n}$ and the design matrix  $\bX^{(n)}\in\R^{n\times p}$.
	The error term $\beps^{(n)}=(\eps_1^{(n)},\eps_2^{(n)}, \ldots,\eps^{(n)})' \in \R^n$ has the normal distribution $N(0, \sigma^2\bI_n)$. We make no assumptions about the relations between $\beps^{(n)}$ and $\beps^{(m)}$ for $n\neq m$.  
	% We consider the low-dimensional setup when $p$ is fixed and $n$ goes to infinity and provide the~conditions under which the SLOPE estimator is~strongly consistent. Additionally, in~case when for each $n$ the design matrix is~orthogonal, we provide the conditions on~the~sequence of~tuning parameters such that SLOPE is strongly consistent in the pattern recovery. 
	In this paper we consider the specific, but statistically important model in which $n\geq p$ and columns of~$\bX$ are orthogonal. The orthogonality assumption allows us to derive, by simple techniques, relatively precise results on the SLOPE estimator (e.g. Theorem 3.1 theorem), which seems unavailable when columns of~$\bX$ are not orthogonal.\\ 
	Substantially more difficult techniques based on subdifferential calculus are developed in~\cite{modrec}. These techniques are used in~\cite{modrec} to establish the properties of the SLOPE estimator in the general case, where the columns of $\bX$ are not orthogonal and $p$ may be much larger than $n$. In the asymptotic theorem proved in~\cite{modrec} under different assumptions on $\bX_n'\bX_n$ stronger restrictions on tuning $\lambda_n$ are considered.
	We provide the~conditions under which the SLOPE estimator is~strongly consistent. Additionally, in~case when for each $n$ the design matrix is~orthogonal, we~provide the conditions on~the~sequence of~tuning parameters such that SLOPE is~strongly consistent in the pattern recovery. 
	%Compared to similar results in \cite{modrec}, dealing with the general design matrices, our~results are obtained using substantially simpler techniques and impose weaker restrictions on the sequence of tuning parameters required for the pattern recovery.
	In Section~\ref{sec:appsim} we show the applications of the SLOPE clustering in terms of high frequency signal denoising and illustrate them with simulations.
	The Appendix covers the proofs of technical results.	
	
	% %%%%%%%%%%%%%%%%%%%%%%%%%%%%%%%%%%%%%%%%%%%%%%%%%%%%%%%%%%%%%%%%%%%%%%%%%%%
	\section{Approach by minimax theorem}\label{sec:minimax}
	
	\subsection{Technical results}
	Let $r_{SLOPE}$ denote the minimum value of  the   SLOPE criterion, attained by $\hat\bbeta^{SLOPE}$, i.e. 
	$$
	r_{SLOPE}:=\min_{{\mathbf b} \in \mathbb{R}^p}  \left [ \,\frac{1}{2} \|\bY-\bX\bb \|_2^2+ J_{\mathbf \Lambda}(\bb)\, \right ]= \frac{1}{2} \|\bY-\bX\widehat{\bbeta}^{SLOPE} \|_2^2+ J_{\mathbf\Lambda}(\widehat{\bbeta}^{SLOPE}).
	$$
	Since 
	$$
	\|\widehat{\bbeta}^{SLOPE} \|_{2} \leq \sqrt{p} \|\widehat{\bbeta}^{SLOPE} \|_{\infty} \quad {\rm and} \quad \lambda_1 \|\widehat{\bbeta}^{SLOPE} \|_{\infty} \leq J_{\mathbf \Lambda}(\widehat{\bbeta}^{SLOPE}) \leq r_{SLOPE},
	$$
	it follows that
	\begin{equation*}
		\lambda_1 \left\|\widehat{\bbeta}^{SLOPE} \right\|_{2} \leq \sqrt{p}\ r_{SLOPE} \leq \sqrt{p} \left[ \frac{1}{2} \|\bY-\bX\bnull \|_2^2+ J_{\mathbf \Lambda}(\bnull)   \right] = \frac{\sqrt{p}}{2} \left\|\bY\right\|_2^2.
	\end{equation*}
	We immediately get the following result.
	\begin{corollary} 
		$\left\|\widehat{\bbeta}^{SLOPE} \right\|_{2}^2 \leq M_0$, where $M_0 = \left(\frac{p \,\|\bY\|_2^4}{4 \lambda_1^2} \right)$.
	\end{corollary}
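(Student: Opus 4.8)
The plan is to read the bound off directly by combining optimality of the minimizer with two elementary norm comparisons; no new ingredient is required. First I would exploit that $\widehat{\bbeta}^{SLOPE}$ minimizes the SLOPE criterion, so the minimal value $r_{SLOPE}$ cannot exceed the value of the criterion at $\bb=\bnull$. Since $J_{\mathbf \Lambda}(\bnull)=0$, this value is $\tfrac12\|\bY-\bX\bnull\|_2^2+J_{\mathbf \Lambda}(\bnull)=\tfrac12\|\bY\|_2^2$, which furnishes the upper bound $r_{SLOPE}\le\tfrac12\|\bY\|_2^2$.

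Next I would control the size of the estimator through $r_{SLOPE}$. The key observation is that the sorted $\ell_1$ norm dominates a multiple of the sup-norm: because $\lambda_1$ is the largest weight and $|\bb|_{(1)}=\|\bb\|_\infty$, while every remaining summand $\lambda_i|\bb|_{(i)}$ is nonnegative, one has $J_{\mathbf \Lambda}(\bb)\ge\lambda_1\|\bb\|_\infty$ for all $\bb$. Applying this at $\bb=\widehat{\bbeta}^{SLOPE}$, and using that the squared-residual term appearing in $r_{SLOPE}$ is nonnegative, gives $\lambda_1\|\widehat{\bbeta}^{SLOPE}\|_\infty\le J_{\mathbf \Lambda}(\widehat{\bbeta}^{SLOPE})\le r_{SLOPE}$. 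Combining this with the trivial comparison $\|\widehat{\bbeta}^{SLOPE}\|_2\le\sqrt{p}\,\|\widehat{\bbeta}^{SLOPE}\|_\infty$ yields
\[
\lambda_1\|\widehat{\bbeta}^{SLOPE}\|_2\le\sqrt{p}\,\lambda_1\|\widehat{\bbeta}^{SLOPE}\|_\infty\le\sqrt{p}\,r_{SLOPE}\le\frac{\sqrt{p}}{2}\|\bY\|_2^2 .
\]

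Finally I would divide by $\lambda_1>0$ and square both sides, obtaining $\|\widehat{\bbeta}^{SLOPE}\|_2^2\le p\,\|\bY\|_2^4/(4\lambda_1^2)=M_0$, which is exactly the asserted inequality. There is no genuine obstacle here: each step is either optimality of the minimizer, nonnegativity of a squared residual, the equivalence of $\ell_2$ and $\ell_\infty$ on $\R^p$, or the lower bound on $J_{\mathbf \Lambda}$. The only point worth recording explicitly is that $J_{\mathbf \Lambda}(\bb)\ge\lambda_1\|\bb\|_\infty$ relies on all weights being nonnegative, which is guaranteed by the standing assumption $\lambda_1\ge\cdots\ge\lambda_p\ge0$.
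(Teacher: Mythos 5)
Your argument is correct and coincides with the paper's own derivation: both bound $r_{SLOPE}$ by evaluating the criterion at $\bnull$, use $\lambda_1\|\widehat{\bbeta}^{SLOPE}\|_\infty\le J_{\mathbf\Lambda}(\widehat{\bbeta}^{SLOPE})\le r_{SLOPE}$ together with $\|\widehat{\bbeta}^{SLOPE}\|_2\le\sqrt{p}\,\|\widehat{\bbeta}^{SLOPE}\|_\infty$, and then square. No differences worth noting.
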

	% \begin{corollary} 
		% $\left\|\widehat{\bbeta}^{SLOPE} \right\|_{2}^2 \leq M_0$, where 
		% $$
		% M_0 = \left(\frac{p \,\|\bY\|_2^4}{4 \lambda_1^2} \right).
		% $$
		% \end{corollary}
	From this corollary it is seen that we
	can clearly limit our search to vectors $\bbeta$ from the~compact set   ${\cal M} \subset \mathbb{R}^p$ defined by ${\cal M}:=\left \{ \bb \in \mathbb{R}^p:  \|\bb\|_{2}^2 \leq M_0  \right \}$.
	%  $$
	% {\cal M}:=\left \{ b \in \mathbb{R}^p:  \|b\|_{2}^2 \leq M  \right \}.
	% $$
	%  $$
	% {\cal M}:=\left \{ \bb \in \mathbb{R}^p:  \|\bb\|_{2}^2 \leq M_0  \right \}.
	% $$
	%Here $M>M_0$ is a positive constant whose value will be specified later, see Remark~\ref{m0m1}. 
	Therefore, we can equivalently define a SLOPE solution by
	\begin{equation}\label{slope1}
		\widehat{\bbeta}^{SLOPE} = \operatorname*{arg\,min}_{{\mathbf b} \in {\cal M}}  \left [ \,\frac{1}{2} \|\bY-\bX\bb \|_2^2+J_{\mathbf \Lambda}(\bb)\, \right ]. 
	\end{equation}
	
	\begin{proposition}\label{lemma1} 
		Let $C_{\mathbf \Lambda}$ be the unit ball in the~dual SLOPE norm. Then, for~each $\bb \in \mathbb{R}^p$, 
		\begin{equation}\label{eq:unitball}
			\displaystyle
			J_{\mathbf \Lambda}(\bb) =\max_{{\mathbf \pi} \in C_{\mathbf \Lambda}} \bpi' \bb.
		\end{equation}
	\end{proposition}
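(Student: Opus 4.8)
The statement is the standard duality identity for the SLOPE norm: $J_{\mathbf\Lambda}$ and the gauge of $C_{\mathbf\Lambda}$ are a polar (dual) pair. The plan is to verify this directly from the explicit description of $C_{\mathbf\Lambda}$ in~\eqref{dualball}, without invoking abstract duality theorems, since the sorted structure makes the extremal $\bpi$ transparent.

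First I would fix $\bb$ and argue that the maximum of $\bpi'\bb$ over $C_{\mathbf\Lambda}$ can be sought among $\bpi$ that share the sign pattern of $\bb$ and are ``sorted consistently'' with $|\bb|$. Concretely, replacing $\pi_i$ by $\sign(b_i)|\pi_i|$ never decreases $\bpi'\bb$ and preserves membership in $C_{\mathbf\Lambda}$ (which only constrains $|\pi|_{(j)}$), so we may assume $\pi_i b_i = |\pi_i|\,|b_i| \ge 0$. Likewise, pairing the largest $|\pi_i|$ with the largest $|b_i|$ can only increase the inner product (a rearrangement argument), so we may assume the coordinates of $|\bpi|$ and $|\bb|$ are sorted in the same order. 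Under these reductions $\bpi'\bb = \sum_i |\pi|_{(i)}\,|\bb|_{(i)}$.

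Next I would prove the two inequalities. For ``$\ge$'': given the partial-sum constraints $\sum_{j\le i}|\pi|_{(j)} \le \sum_{j\le i}\lambda_j$ together with the decreasing weights $|\bb|_{(1)} \ge \cdots \ge |\bb|_{(p)} \ge 0$, an Abel summation (summation by parts) gives
\begin{equation*}
\sum_{i=1}^p |\pi|_{(i)}\,|\bb|_{(i)} \le \sum_{i=1}^p \lambda_i\,|\bb|_{(i)} = J_{\mathbf\Lambda}(\bb),
\end{equation*}
because each partial-sum deficit is multiplied by a nonnegative increment $|\bb|_{(i)}-|\bb|_{(i+1)}$. This shows $\max_{\bpi\in C_{\mathbf\Lambda}}\bpi'\bb \le J_{\mathbf\Lambda}(\bb)$. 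For the reverse, I would exhibit an explicit maximizer: take $\bpi$ with $|\pi|_{(i)}=\lambda_i$, signs matching $\bb$, and the sorting order of $|\bpi|$ aligned with that of $|\bb|$. This $\bpi$ satisfies every constraint in~\eqref{dualball} with equality on the partial sums, lies in $C_{\mathbf\Lambda}$, and attains $\bpi'\bb = \sum_i \lambda_i|\bb|_{(i)} = J_{\mathbf\Lambda}(\bb)$.

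The main obstacle is purely bookkeeping: making the sign-alignment and rearrangement reductions rigorous, and handling ties among the $|b_i|$ (where the sorting order is not unique) so that the constructed $\bpi$ is still a valid, well-defined member of $C_{\mathbf\Lambda}$. Since the constraints in~\eqref{dualball} depend only on the sorted absolute values $|\pi|_{(j)}$, any consistent tie-breaking yields the same value, so this is a matter of careful notation rather than a genuine difficulty. Combining the two inequalities gives the claimed equality.
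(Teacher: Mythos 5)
Your proposal is correct, but it takes a genuinely different route from the paper. The paper's proof is a one-line appeal to abstract duality: since $C_{\mathbf\Lambda}$ is \emph{defined} as the unit ball of the dual norm $J_{\mathbf\Lambda}^*$, the identity $J_{\mathbf\Lambda}(\bb)=\max_{\bpi\in C_{\mathbf\Lambda}}\bpi'\bb$ is just the biduality $(\R^p,J_{\mathbf\Lambda})=(\R^p,J_{\mathbf\Lambda}^*)^*$ of finite-dimensional normed spaces. You instead take the explicit signed-permutahedron description~\eqref{dualball} as your starting point and verify the identity by hand: sign alignment, a rearrangement argument, Abel summation against the partial-sum constraints for the upper bound, and an explicit maximizer $|\pi|_{(i)}=\lambda_i$ for the lower bound. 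All of these steps are sound (the only wrinkle, which you flag, is that coordinates with $b_i=0$ force $\pi_i=0$ under your sign convention, but those terms contribute nothing and the partial-sum constraints are only slackened). What the paper's argument buys is brevity and generality — it works verbatim for any norm. What yours buys is self-containedness (no duality theorem quoted) and, more substantively, a constructive identification of the maximizing $\bpi$ together with its sign- and sorting-alignment with $\bb$; these are precisely the structural facts the paper later has to re-derive in Lemma~\ref{lemma 2 bis}(a)--(b) via the Hardy--Littlewood--P\'olya inequality. One caveat: under the paper's definition of $C_{\mathbf\Lambda}$ as the dual unit ball, your computation is really a proof that the polyhedron~\eqref{dualball} \emph{is} that dual ball (a fact the paper imports from the literature), so you should state explicitly which object you are taking as the definition.
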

	The proof is a simple application of the definition of the dual norm and the reflexivity of~$(\R^p,J_{\mathbf{\Lambda}}) = (\R^p,J_{\mathbf{\Lambda}}^{*})^{*}$. Thus
	$$
	J_{\mathbf{\Lambda}}(\bb) = \|\bb\|_{(\R^p,J_{\mathbf{\Lambda}})} = \sup\limits_{\mathbf{x}:J_{\mathbf{\Lambda}}^{*}(\mathbf{x})\leq 1}\mathbf{x}'\bb.
	$$
	\begin{remark}
		\begin{enumerate}
			\item[(a)] A different, longer proof is given in~\cite[Proposition 1.1]{MBtheory}
			\item[(b)] The formula~\eqref{eq:unitball} holds in much greater generality of Lov\'asz extensions in place of the $J_{\mathbf{\Lambda}}$ norm, see~\cite{minami}. We thank an anonymous referee for noticing this fact.
		\end{enumerate}
	\end{remark}
	%This statement follows from the definition of $C_{\mathbf \Lambda}$~\eqref{dualball} and from the reflexivity of finite-dimensional spaces.
	%The proof is contained in~\cite{MBslope}\mbmm{To cite the exact Lemma or to write that it may be inferred from the proof of Lemma(number) in~\cite{MBslope}}.
	\subsection{Saddle point} \label{Section Saddle point}
	Let  the function $r:{\cal M} \times C_{\mathbf \Lambda} \rightarrow \mathbb{R}$ be defined by
	$$
	r(\bb,\bpi):= \frac{1}{2} \|\bY-\bX\bb \|_2^2+ \bpi' \bb.
	$$
	As an immediate consequence of (\eqref{slope1}) and  {Proposition \ref{lemma1}}   we obtain
	\begin{align*} 
		%\lefteqn{ 
			r_{SLOPE}&=&\min_{{\mathbf b} \in \mathbb{R}^p}  \left [ \,\frac{1}{2} \|\bY-\bX\bb \|_2^2+ J_{\mathbf \Lambda}(\bb)\, \right ] = \min_{{\mathbf b} \in {\cal M}}  \left [ \,\frac{1}{2} \|\bY-\bX\bb \|_2^2+ J_{\mathbf \Lambda}(\bb)\, \right ]%} 
		\\  
		& = &\min_{{\mathbf b} \in {\cal M}} \max_{{\mathbf \pi} \in C_{\mathbf \Lambda}}  \left [ \,\frac{1}{2} \|\bY-\bX\bb \|_2^2+ \bpi' \bb\, \right ] = \min_{{\mathbf b} \in {\cal M}} \max_{{\mathbf \pi} \in C_{\mathbf \Lambda}} \, r(\bb,\bpi).  
	\end{align*}
	It turns out that the order of the maximization over $\bpi \in C_{\mathbf \Lambda}$ and the minimization over $\bb  \in {\cal M}$ can be switched without affecting the result. To see this, note that both $C_{\mathbf \Lambda}$ and ${\cal M}$  are convex and compact. Moreover,
	for each fixed $\bpi \in C_{\mathbf \Lambda}$, $r(\bb,\bpi)$ is~a~convex continuous function with respect to
	$\bb \in {\cal M}$ and, for each fixed $\bb \in{\cal M}$,
	$r(\bb,\bpi)$ is concave  with respect to~$\bpi \in C_{\mathbf \Lambda}$ (in fact, it is linear).
	Therefore, all assumptions of the Sion's minimax  theorem  are fulfilled (see~\cite[p. 218]{aubin}) and thus there exists a saddle point  $(\bbeta^*, \bpi^*) \in {\cal M} \times  C_{\mathbf \Lambda}$ such that 
	\begin{eqnarray*}
		\max_{{\mathbf \pi} \in C_{\mathbf \Lambda}} \min_{{\mathbf b} \in {\cal M}}\; r(\bb,\bpi) & = & \min_{{\mathbf b} \in {\cal M}}\; r(\bb,\bpi^*) = 
		r(\bbeta^*,\bpi^*) \\
		&=&  \max_{{\mathbf \pi} \in C_{\mathbf \Lambda}} \; r(\bbeta^*,\bpi)  = 
		\min_{{\mathbf b} \in {\cal M}} \max_{{\mathbf \pi} \in C_{\mathbf \Lambda}} \; r(\bb,\bpi) =r_{SLOPE}. \nonumber
	\end{eqnarray*}
	In the next section we shall see that the first coordinate of  any saddle point $(\bbeta^*, \bpi^*)$ is~the~SLOPE estimator.
	
	%\subsection{SLOPE solution when $X'X = I_p$} 
	\subsection{SLOPE solution for the orthogonal design}
	\label{Section invertible}
	Since for each fixed $\bpi \in C_{\mathbf \Lambda}$, the function $r(\bb,\bpi)$ is convex with respect to $\bb \in {\cal M}$, 
	any~point $\bb_{\bpi} \in {\cal M}$, at which the gradient $\displaystyle \frac{\partial r(\bb,\bpi) }{\partial \bb}$ is zero, is a global minimum. 
	If we rewrite $r(\bb,\bpi)$ as
	\begin{equation*}
		r(\bb,\bpi) = \frac{1}{2}\bY' \bY-\bY'\bX \bb+ \frac{1}{2}\bb'\bX'\bX \bb+ \bpi' \bb 
	\end{equation*} 
	and differentiate with respect to $\bb$, we obtain 
	$$
	\frac{\partial r(\bb,\bpi) }{\partial{\bb}} = -\bX'(\bY - \bX\bb)+\bpi. 
	$$
	Equating this gradient to $\bnull$  gives the following equation for the optimum point $\bb_{\bpi}$:
	\begin{equation}\label{gradient}
		\bX'\bX\bb_{\bpi}= \bX'\bY - \bpi.% = \hat\bbeta^{OLS} - \bpi.
	\end{equation}
	Substituting this into the equation for $r(\bb_{\bpi},\bpi)$, we find that
	\begin{eqnarray*}
		r(\bb_{\bpi},\bpi)&=&\frac{1}{2}\bY' \bY-\bb_{\bpi}'\bX' \bY+ \frac{1}{2}\bb_{\bpi}'\bX'\bX \bb_{\bpi}+\bpi' \bb_{\bpi} \\
		&=&  \frac{1}{2}\bY' \bY-\bb_{\bpi}'\bX' \bY+ \bb_{\bpi}'\bX'\bX \bb_{\bpi}+\bb_{\bpi}' \bpi -
		\frac{1}{2}\bb_{\bpi}'\bX' \bX \bb_{\bpi}  \\ 
		& = & \frac{1}{2}\bY' \bY-
		\frac{1}{2}\bb_{\bpi}'\bX'\bX \bb_{\bpi} = \frac{1}{2}\bY' \bY-
		\frac{1}{2}\bb_{\bpi}'\bX'\bX(\bX'\bX)^{-1}\bX'\bX \bb_{\bpi} \\
		&=& \frac{1}{2}\bY' \bY-\frac{1}{2}(\bX'\bY - \bpi)'(\bX'\bX)^{-1}(\bX'\bY - \bpi).
	\end{eqnarray*}
	Let $p_j=|\{i: |m_i|=k+1-j\}|$ be the number of elements of the $j^{th}$ cluster of~$\bbeta$, $P_j=\sum\limits_{i\le j}p_i$ and $P_{k+1}=p$. 
	\begin{lemma}  \label{lemma 2 bis}
		Let $\bpi^*=(\pi_1^*,\ldots,\pi_p^*)' \in C_{\mathbf \Lambda}$ be any solution of
		\begin{equation*}
			\bpi^*=\operatorname*{arg\,min}_{\mathbf \pi \in C_{\mathbf \Lambda}} \left[ (\bX'\bY - \bpi)'(\bX'\bX)^{-1}(\bX'\bY-\bpi) \,\right ]
		\end{equation*}
		and let $\bbeta^*=(\beta_1^*,\ldots,\beta_p^*)'$ be the corresponding point from ${\cal M}$ given by
		\begin{equation*}
			\bbeta^*= (\bX'\bX)^{-1}(\bX'\bY - \bpi^*). 
		\end{equation*}
		Then, $(\bpi-\bpi^*)' \bbeta^*   \leq  0$,  for all  $\bpi \in C_{\mathbf \Lambda}$ and hence 
		\begin{enumerate}
			\item[(a)] $\mbox{\rm sign}\left(\beta_i^*\right) \cdot \mbox{\rm sign}\lt(\pi_i^*\rt) \ge 0$, $i=1,2,\ldots,p$,
			\item[(b)] $\lt(|\pi_1^*|,\ldots,|\pi_p^*|\rt)$ and~$\lt(|\beta_1^*|,\ldots,|\beta_p^*|\rt)$ are similarly sorted, i.e.\\ if $|(patt(\beta))_i|=k+1-j$, then $\lt|\pi^*\rt|_{i}\in\lt\{|\pi^*|_{(P_{j-1}+1)},\ldots,|\pi^*|_{(P_{j})}\rt\}$,
			\item[(c)] for any permutation $\tau$ satisfying $|\beta^*_{\tau(1)}|\geq\ldots\geq|\beta^*_{\tau(p)}|$, if there is a $k \in \{2,\ldots,p\}$ such that $\sum_{i=1}^{k-1} \lt|\pi^*_{\tau(i)}\rt|< \sum_{i=1}^{k-1} \lambda_i \quad \mbox{\rm and} \quad \lt|\pi^*_{\tau(k)}\rt|>0$,\\ \noindent
			% $$
			% \sum_{i=1}^{k-1} \lt|\pi^*_{\tau(i)}\rt|< \sum_{i=1}^{k-1} \lambda_i \quad \mbox{\rm and} \quad \lt|\pi^*_{\tau(k)}\rt|>0,
			% $$
			then $\lt|\beta^*_{\tau(k-1)}\rt|=\lt|\beta^*_{\tau(k)}\rt|$.
		\end{enumerate}
	\end{lemma}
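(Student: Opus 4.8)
The plan is to recognise the two defining equations as a constrained convex optimisation together with its optimality certificate. Writing $A=(\bX'\bX)^{-1}$, the objective $f(\bpi)=(\bX'\bY-\bpi)'A(\bX'\bY-\bpi)$ is a convex quadratic on the convex compact set $C_{\mathbf \Lambda}$, with gradient $\nabla f(\bpi)=-2A(\bX'\bY-\bpi)$; in particular $\nabla f(\bpi^*)=-2\bbeta^*$. First I would invoke the first-order (variational) inequality characterising a minimiser of a differentiable convex function over a convex set, namely $\nabla f(\bpi^*)'(\bpi-\bpi^*)\ge 0$ for all $\bpi\in C_{\mathbf \Lambda}$. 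Substituting the gradient gives at once $(\bpi-\bpi^*)'\bbeta^*\le 0$ for all $\bpi\in C_{\mathbf \Lambda}$, which is the central inequality. This is the only genuinely analytic input; the three listed conclusions are then combinatorial consequences.

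The central inequality says exactly that $\bpi^*$ maximises the linear functional $\bpi\mapsto\bpi'\bbeta^*$ over $C_{\mathbf \Lambda}$, so by Proposition~\ref{lemma1} we also record that $(\bpi^*)'\bbeta^*=J_{\mathbf \Lambda}(\bbeta^*)$. To extract (a)--(c) I would use the two symmetries built into the description \eqref{dualball} of $C_{\mathbf \Lambda}$: it is invariant under sign changes and under permutations of the coordinates, because only the sorted absolute values $|\pi|_{(j)}$ enter the constraints. For (a), if $\sign(\beta_i^*)\sign(\pi_i^*)<0$ for some $i$, then flipping the sign of $\pi_i^*$ keeps the vector in $C_{\mathbf \Lambda}$ and strictly increases $\bpi'\bbeta^*$, contradicting maximality; hence the signs are compatible coordinatewise. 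For (b), once the signs are aligned the objective equals $\sum_i|\pi_i^*|\,|\beta_i^*|$; if $|\beta_i^*|>|\beta_j^*|$ but $|\pi_i^*|<|\pi_j^*|$, interchanging the $i$-th and $j$-th coordinates of $\pi^*$ stays in $C_{\mathbf \Lambda}$ (a permutation) and changes the objective by $(|\pi_j^*|-|\pi_i^*|)(|\beta_i^*|-|\beta_j^*|)>0$, again impossible. Thus $|\pi^*|$ is monotone along any ordering that makes $|\beta^*|$ monotone, which is the block statement (b).

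The delicate conclusion is (c), and I would prove it by a mass-transfer perturbation at a cluster boundary. Fix $\tau$ with $|\beta^*_{\tau(1)}|\ge\ldots\ge|\beta^*_{\tau(p)}|$ and suppose the hypotheses of (c) hold while, for contradiction, $|\beta^*_{\tau(k-1)}|>|\beta^*_{\tau(k)}|$; then $k-1$ is a boundary $P_j$ between two $\bbeta$-clusters, and by (b) the coordinates $\{\tau(1),\ldots,\tau(k-1)\}$ carry precisely the $k-1$ largest values of $|\pi^*|$, so the hypothesis $\sum_{i=1}^{k-1}|\pi^*_{\tau(i)}|<\sum_{i=1}^{k-1}\lambda_i$ is genuine slack of the sorted constraint \eqref{dualball} at level $k-1$. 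Keeping all signs fixed, I would raise by a small $\eps>0$ the coordinate of cluster $j$ carrying the value $|\pi^*|_{(k-1)}$ and lower by $\eps$ the coordinate of cluster $j+1$ carrying the value $|\pi^*|_{(k)}$ (admissible since the latter is $\ge|\pi^*_{\tau(k)}|>0$). Checking \eqref{dualball}: the sorted partial sums below level $k-1$ are untouched, the level-$(k-1)$ partial sum rises by $\eps$ and stays feasible exactly because that constraint was slack, and at levels $\ge k$ the two changes cancel; so the perturbed vector remains in $C_{\mathbf \Lambda}$ while the objective increases by $\eps\,(|\beta^*_{\tau(k-1)}|-|\beta^*_{\tau(k)}|)>0$, contradicting maximality. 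Hence $|\beta^*_{\tau(k-1)}|=|\beta^*_{\tau(k)}|$.

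I expect the main obstacle to be the book-keeping in this last step: one must move the coordinates sitting at sorted ranks $k-1$ and $k$ (not an arbitrary pair drawn from the two clusters) so that only the single slack constraint is disturbed, and one must control possible ties among the $|\pi^*|$ values at those ranks so that the small perturbation does not silently raise a tight lower-level partial sum. The observation that, at a genuine cluster boundary, the $\tau$-indexed partial sum coincides with the sorted partial sum --- a direct consequence of (b) --- is what makes the slack hypothesis usable and keeps this accounting under control.
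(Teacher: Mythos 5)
Your proposal follows essentially the same route as the paper: the first-order variational inequality $\nabla g(\bpi^*)'(\bpi-\bpi^*)\ge 0$ with $\nabla g(\bpi^*)=-2\bbeta^*$ yielding the central inequality, a sign flip for (a), a rearrangement argument for (b) (the paper invokes the Hardy--Littlewood--P\'olya inequality where you perform the equivalent pairwise exchange), and a small signed mass transfer between the coordinates straddling the cluster boundary for (c). If anything, your treatment of (c) is more careful than the paper's, which simply asserts the existence of a feasible $\delta$-perturbation without the rank and tie book-keeping you spell out.
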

	The proof is given in the Appendix.
	An immediate consequence of the Lemma is the following result.
	\begin{lemma} \label{lemma 3} The point $(\bbeta^*,\bpi^* )$  defined as in Lemma~\ref{lemma 2 bis} is the saddle point of the function $r(\bb,\bpi)$.% : {\cal M} \times  C_{\mathbf \Lambda} \longrightarrow [0,\infty)$.
	\end{lemma}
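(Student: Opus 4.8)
The plan is to verify the two saddle-point inequalities directly, namely
\begin{equation*}
	r(\bbeta^*,\bpi) \le r(\bbeta^*,\bpi^*) \le r(\bb,\bpi^*)
	\qquad \text{for all } \bb\in\mathcal{M},\ \bpi\in C_{\mathbf \Lambda},
\end{equation*}
using only the two facts already in hand: the stationarity equation~\eqref{gradient} and the inequality $(\bpi-\bpi^*)'\bbeta^*\le 0$ established in Lemma~\ref{lemma 2 bis}. Once both inequalities hold, the pair $(\bbeta^*,\bpi^*)$ is by definition a saddle point of $r$ on $\mathcal{M}\times C_{\mathbf \Lambda}$, which matches exactly the chain of equalities displayed in Section~\ref{Section Saddle point}.

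For the right-hand inequality (the minimization in $\bb$) I would invoke convexity: for the fixed dual vector $\bpi^*$ the map $\bb\mapsto r(\bb,\bpi^*)$ is convex, so any stationary point is a global minimizer over all of $\R^p$. By construction $\bbeta^*=(\bX'\bX)^{-1}(\bX'\bY-\bpi^*)$ solves precisely the stationarity equation~\eqref{gradient} with $\bpi=\bpi^*$; hence $r(\bbeta^*,\bpi^*)\le r(\bb,\bpi^*)$ for every $\bb\in\R^p$, and in particular for every $\bb\in\mathcal{M}$. The only role of the compact set $\mathcal{M}$ here is to house the point $\bbeta^*$, which belongs to it by the way it is singled out in Lemma~\ref{lemma 2 bis}.

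For the left-hand inequality (the maximization in $\bpi$) I would observe that, with $\bbeta^*$ held fixed, the quadratic term $\tfrac12\|\bY-\bX\bbeta^*\|_2^2$ in $r(\bbeta^*,\bpi)$ is independent of $\bpi$, so that
\begin{equation*}
	r(\bbeta^*,\bpi)-r(\bbeta^*,\bpi^*) = (\bpi-\bpi^*)'\bbeta^*.
\end{equation*}
The right-hand side is exactly the quantity shown to be non-positive for every $\bpi\in C_{\mathbf \Lambda}$ in Lemma~\ref{lemma 2 bis}, whence $r(\bbeta^*,\bpi)\le r(\bbeta^*,\bpi^*)$. Combining the two inequalities yields the saddle-point property.

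I expect no genuine obstacle: the statement is essentially a repackaging of the conclusion $(\bpi-\bpi^*)'\bbeta^*\le 0$ of Lemma~\ref{lemma 2 bis} together with the first-order optimality in~\eqref{gradient}, and indeed the excerpt advertises it as an immediate consequence. The one point deserving a word of care is that both coordinates of the pair genuinely lie in the prescribed domain $\mathcal{M}\times C_{\mathbf \Lambda}$, so that calling it a saddle point is meaningful: $\bpi^*\in C_{\mathbf \Lambda}$ holds by definition, and $\bbeta^*\in\mathcal{M}$ as noted above.
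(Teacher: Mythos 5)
Your proposal is correct and follows essentially the same route as the paper: both rest on the two facts that $(\bpi-\bpi^*)'\bbeta^*\le 0$ (giving the maximization inequality in $\bpi$) and that $\bbeta^*$ minimizes the convex map $\bb\mapsto r(\bb,\bpi^*)$ by the stationarity equation~\eqref{gradient} (giving the minimization inequality in $\bb$); the paper merely phrases the conclusion as the chain $\min\max\le\max\min$ closed by the max--min inequality rather than as the two pointwise saddle inequalities, which is an equivalent formulation. Your side remark that $\bbeta^*\in\mathcal{M}$ needs justification is well taken and is exactly what Remark~\ref{m0m1} of the paper is there to ensure.
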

	The proof is given in the Appendix.
	We use the last lemma to prove the main result of this section.
	\begin{theorem} \label{th:charactMW} 
		Let the point  $\bbeta^*$ be defined as in Lemma~\ref{lemma 2 bis}. Then  $\bbeta^*$ is the SLOPE estimator of $\bbeta$.
	\end{theorem}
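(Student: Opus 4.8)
The plan is to read the claim directly off the saddle-point characterization of Lemma~\ref{lemma 3} together with the duality formula of Proposition~\ref{lemma1}. By Lemma~\ref{lemma 3} the pair $(\bbeta^*,\bpi^*)$ is a saddle point of $r$, which by definition means
\begin{equation*}
	r(\bbeta^*,\bpi) \le r(\bbeta^*,\bpi^*) \le r(\bb,\bpi^*)
	\qquad \text{for all } \bb\in{\cal M},\ \bpi\in C_{\mathbf \Lambda}.
\end{equation*}
I would use only the left-hand inequality: taking the maximum over $\bpi\in C_{\mathbf \Lambda}$ yields $\max_{\bpi\in C_{\mathbf \Lambda}} r(\bbeta^*,\bpi) = r(\bbeta^*,\bpi^*)$, since the right-hand side is trivially attained at $\bpi=\bpi^*$.

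Next I would evaluate that maximum by Proposition~\ref{lemma1}. Because $r(\bb,\bpi)=\frac12\|\bY-\bX\bb\|_2^2+\bpi'\bb$ and the quadratic term does not depend on $\bpi$, we get
\begin{equation*}
	\max_{\bpi\in C_{\mathbf \Lambda}} r(\bbeta^*,\bpi)
	= \frac12\|\bY-\bX\bbeta^*\|_2^2 + \max_{\bpi\in C_{\mathbf \Lambda}}\bpi'\bbeta^*
	= \frac12\|\bY-\bX\bbeta^*\|_2^2 + J_{\mathbf \Lambda}(\bbeta^*).
\end{equation*}
Combining this with the saddle value $r(\bbeta^*,\bpi^*)=r_{SLOPE}$ recorded in Section~\ref{Section Saddle point} gives
\begin{equation*}
	\frac12\|\bY-\bX\bbeta^*\|_2^2 + J_{\mathbf \Lambda}(\bbeta^*) = r_{SLOPE}
	= \min_{\bb\in\R^p}\Bigl[\tfrac12\|\bY-\bX\bb\|_2^2+J_{\mathbf \Lambda}(\bb)\Bigr],
\end{equation*}
so $\bbeta^*$ attains the minimum of the SLOPE criterion and is therefore a SLOPE estimator.

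Finally, to justify the definite article, I would invoke uniqueness: under the orthogonal design $\bX'\bX=n\bI_p$ the quadratic $\frac12\|\bY-\bX\bb\|_2^2$ is strictly convex in $\bb$, hence the whole SLOPE objective is strictly convex and has a single minimizer, which must coincide with $\bbeta^*$. I expect no real obstacle here, since all the substantive work has been front-loaded into Lemmas~\ref{lemma 2 bis} and~\ref{lemma 3}, where the geometry of $C_{\mathbf \Lambda}$ and the sign/sorting conditions are used to verify the saddle inequalities. The only point requiring a word of care is that both the definition of $\bbeta^*$ and the duality are posed on the compact set ${\cal M}$; this is harmless because the Corollary guarantees that the true SLOPE solution already lies in ${\cal M}$, so the restricted and unrestricted minima agree, exactly as in~\eqref{slope1}.
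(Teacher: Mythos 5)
Your proposal is correct and takes essentially the same route as the paper: both arguments combine the saddle-point identity from Lemma~\ref{lemma 3} with the dual-ball representation $J_{\mathbf \Lambda}(\bbeta^*)=\max_{\bpi\in C_{\mathbf \Lambda}}\bpi'\bbeta^*$ of Proposition~\ref{lemma1} and the equivalence of the restricted and unrestricted minimizations from~\eqref{slope1} to conclude that $\bbeta^*$ attains $r_{SLOPE}$. Your added remarks on strict convexity (uniqueness) and on why restricting to ${\cal M}$ is harmless are correct and merely make explicit what the paper leaves implicit.
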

	
	\begin{proof}
		
		Using the fact that  $\displaystyle \max_{{\mathbf \pi} \in C_{\mathbf \Lambda}} \; r(\bbeta^*,\bpi) = \min_{{\mathbf b} \in {\cal M}} \max_{{\mathbf \pi} \in C_{\mathbf \Lambda}} \; r(\bb,\bpi)$ (see previous lemma) we have  
		\begin{eqnarray*} 
			\lefteqn{ \frac{1}{2} \left\|\bY-\bX\bbeta^* \right\|_2^2+ J_{\mathbf \Lambda}(\bbeta^*) = 
				\max_{{\mathbf \pi} \in C_{\mathbf \Lambda}} \left [ \,\frac{1}{2} \left\|\bY-\bX\bbeta^* \right\|_2^2+ \bpi' \bbeta^*\, \right ] }\hspace{5mm} \\ 
			&=&  \max_{{\mathbf \pi} \in C_{\mathbf \Lambda}} \; r(\bbeta^*,\bpi) 
			=  \min_{{\mathbf b} \in {\cal M}} \max_{{\mathbf \pi} \in C_{\mathbf \Lambda}} \; r(\bb,\bpi) 
			=\min_{{\mathbf b} \in \mathbb{R}^p}  \left [ \,\frac{1}{2} \left\|\bY-\bX\bb \right\|_2^2+ J_{\mathbf \Lambda}(\bb)\, \right ].\qedhere
		\end{eqnarray*}
	\end{proof}
	\begin{corollary}\label{cor:SLOPEvsOLS}
		In the linear model satisfying $\frac{1}{n}\bX'\bX=\bI_p$ we have
		% In the linear model with orthogonal design $\frac{1}{n}\bX'\bX=\bI_p$ we have
		$$
		\hat\bbeta^{OLS}-\hat\bbeta^{SLOPE}=\frac{1}{n}\bpi^* = \frac{1}{n} \operatorname*{arg\,min}\limits_{\mathbf \pi\in C_{\mathbf \Lambda}} \left\|\hat\bbeta^{OLS}-\bpi\right\|^2_2 = \operatorname*{arg\,min}\limits_{\mathbf \pi\in C_{\mathbf \Lambda/n}} \left\|\hat\bbeta^{OLS}-\bpi\right\|^2_2,
		$$
		is the proximal projection of $\hat\bbeta^{OLS}$ onto $C_{\mathbf \Lambda/n}$.%\ptmm{Geometry1 and the paper Kentaro Minami (Degrees of freedom in submodular regularization: A computational
			%perspective of Stein’s unbiased risk estimate) seem related.}
	\end{corollary}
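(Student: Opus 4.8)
The plan is to combine Theorem~\ref{th:charactMW} and Lemma~\ref{lemma 2 bis} with the orthogonality relation $(\bX'\bX)^{-1}=\tfrac1n\bI_p$, which collapses the weighted quadratic form defining $\bpi^*$ into an ordinary squared Euclidean distance. First I would recall that $\hat\bbeta^{SLOPE}=\bbeta^*=(\bX'\bX)^{-1}(\bX'\bY-\bpi^*)$ while $\hat\bbeta^{OLS}=(\bX'\bX)^{-1}\bX'\bY$; subtracting these and inserting $(\bX'\bX)^{-1}=\tfrac1n\bI_p$ yields at once
\begin{equation*}
\hat\bbeta^{OLS}-\hat\bbeta^{SLOPE}=(\bX'\bX)^{-1}\bpi^*=\tfrac1n\bpi^*,
\end{equation*}
which is the first equality.

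Next I would reformulate $\bpi^*$ itself as a metric projection. Substituting $(\bX'\bX)^{-1}=\tfrac1n\bI_p$ into the minimisation of Lemma~\ref{lemma 2 bis}, the form $(\bX'\bY-\bpi)'(\bX'\bX)^{-1}(\bX'\bY-\bpi)$ becomes $\tfrac1n\|\bX'\bY-\bpi\|_2^2$, and since the positive factor $\tfrac1n$ does not move the minimiser, $\bpi^*=\operatorname*{arg\,min}_{\bpi\in C_{\mathbf\Lambda}}\|\bX'\bY-\bpi\|_2^2$ is the projection of $\bX'\bY$ onto the compact convex set $C_{\mathbf\Lambda}$. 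It is important here to carry the scale correctly: in the orthogonal design $\bX'\bY=n\hat\bbeta^{OLS}$, so $\bpi^*$ projects $n\hat\bbeta^{OLS}$ onto $C_{\mathbf\Lambda}$, which is the content of the middle expression once it is read with $\bX'\bY=n\hat\bbeta^{OLS}$.

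Finally I would pass to the rescaled ball. From the signed-permutahedron description~\eqref{dualball}, dividing each $\lambda_i$ by $n$ divides every partial-sum constraint by $n$, so $\bpi\in C_{\mathbf\Lambda/n}\iff n\bpi\in C_{\mathbf\Lambda}$, i.e. $C_{\mathbf\Lambda/n}=\tfrac1n C_{\mathbf\Lambda}$. Using the elementary scaling of projections $\operatorname*{arg\,min}_{\bpi\in cK}\|\mathbf x-\bpi\|_2=c\,\operatorname*{arg\,min}_{\bpi\in K}\|\mathbf x/c-\bpi\|_2$ for $c>0$, taken with $c=\tfrac1n$, $K=C_{\mathbf\Lambda}$ and $\mathbf x=\hat\bbeta^{OLS}$, gives
\begin{equation*}
\operatorname*{arg\,min}_{\bpi\in C_{\mathbf\Lambda/n}}\|\hat\bbeta^{OLS}-\bpi\|_2^2=\tfrac1n\operatorname*{arg\,min}_{\bpi\in C_{\mathbf\Lambda}}\|n\hat\bbeta^{OLS}-\bpi\|_2^2=\tfrac1n\bpi^*,
\end{equation*}
so that $\hat\bbeta^{OLS}-\hat\bbeta^{SLOPE}=\tfrac1n\bpi^*$ is exactly the proximal projection of $\hat\bbeta^{OLS}$ onto $C_{\mathbf\Lambda/n}$, closing the chain.

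I expect the only genuine subtlety to be the bookkeeping of the factor $n$: the quadratic form of Lemma~\ref{lemma 2 bis} is centred at $\bX'\bY=n\hat\bbeta^{OLS}$ rather than at $\hat\bbeta^{OLS}$, so the rescaling of the point must be matched exactly by the rescaling of the ball from $C_{\mathbf\Lambda}$ to $C_{\mathbf\Lambda/n}$, and the projection-scaling identity above is what reconciles the two. Everything else is routine, namely that a positive multiplicative constant leaves an $\operatorname*{arg\,min}$ unchanged and that the Euclidean projection onto a nonempty compact convex set exists and is unique.
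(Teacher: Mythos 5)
Your proposal is correct and follows exactly the route the paper intends: the corollary is stated without a separate proof as an immediate consequence of Theorem~\ref{th:charactMW} and Lemma~\ref{lemma 2 bis}, and your substitution of $(\bX'\bX)^{-1}=\tfrac1n\bI_p$ together with the scaling identity $C_{\mathbf\Lambda/n}=\tfrac1n C_{\mathbf\Lambda}$ and the projection-scaling formula is precisely what that reduction amounts to. You have also correctly isolated the one genuine subtlety, the bookkeeping of the factor $n$: as printed, the middle expression projects $\hat\bbeta^{OLS}$ rather than $\bX'\bY=n\,\hat\bbeta^{OLS}$ onto $C_{\mathbf\Lambda}$, and your reading (projecting $n\,\hat\bbeta^{OLS}$) is the one that makes the whole chain of equalities consistent with the first and last terms.
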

	Projections onto $C_{\mathbf{\Lambda}}$ are widely used in~\cite{minami} in the study of the notion of degrees of freedom. However, the Corollary~\ref{cor:SLOPEvsOLS} is not stated there explicitely.% in~\cite{minami}.
	\begin{remark}\label{m0m1}
		For each $\bpi \in C_{\mathbf \Lambda}$, the point $\bb_{\mathbf \pi}$ defined in (\ref{gradient}) should be in 
		$$
		\left \{ \bb \in \mathbb{R}^p:  \|\bb\|_{2}^2 \leq M  \right \},
		$$
		where $M$ is chosen so that $M>\max\{M_0,M_1 \}$ with
		$$
		M_1:=\max_{{\mathbf \pi} \in C_{\mathbf \Lambda}} \| (\bX'\bX)^{-1} (\bX'\bY - \bpi)  \|_{2}^2 \leq M.
		$$
	\end{remark}

	\section{Properties of SLOPE in the orthogonal design} 
	\label{subsec:ONslope}
	\subsection{SLOPE vs. OLS}
	By the Theorem~\ref{th:charactMW} and Corollary~\ref{cor:SLOPEvsOLS}, when\\ $\frac1n \bX'\bX=\bI_p$, the orthogonal projection of~the~ordinary least squares estimator\\ ${\hat\bbeta^{OLS}=\frac1n \bX'\bY}$ onto the unit ball $C_{{\mathbf \Lambda}/n}$ is~equal~to $\hat\bbeta^{OLS}-\hat\bbeta^{SLOPE}$.
	For $\bLambda=(200,100)'$ and $n=50$ this property is illustrated on Figure~\ref{fig:OLSonSLOPE}. 
	The figure represents $\hat \bbeta^{SLOPE}$ (black arrows) depending on the localization of $\hat\bbeta^{OLS}$ in the orthogonal design. For $\hat\bbeta^{OLS}$ being the blue point located on the area labelled by $(1,0)$ the first component of~$\hat \bbeta^{SLOPE}$ is positive and the second is null. 
	For $\hat\bbeta^{OLS}$ being the yellow point located on the area labelled by $(-1,1)$ both components of $\hat \bbeta^{SLOPE}$ have equal absolute value (clusterization), but their signs are opposite. For $\hat\bbeta^{OLS}$ being the~red point located on the area labelled by $(1,2)$  both components of $\hat\bbeta^{SLOPE}$ are positive and the first component is~smaller than the second one. The blue polytope is the dual SLOPE unit ball $C_{\mathbf \Lambda}$ %signed permutahedron $P^{\pm}_{\bLambda}$ 
	and labels 
	$$
	\mathcal{M}_2 = \{(0,0),(\pm 1,0), (0,\pm 1),(\pm 1,\pm 1),  (\pm 2,\pm 1), (\pm 1,\pm 2)\}
	$$
	associated to the areas of this figure correspond to all SLOPE patterns for\\ $n = 50$ and $p = 2$.% in $\R^2$.
	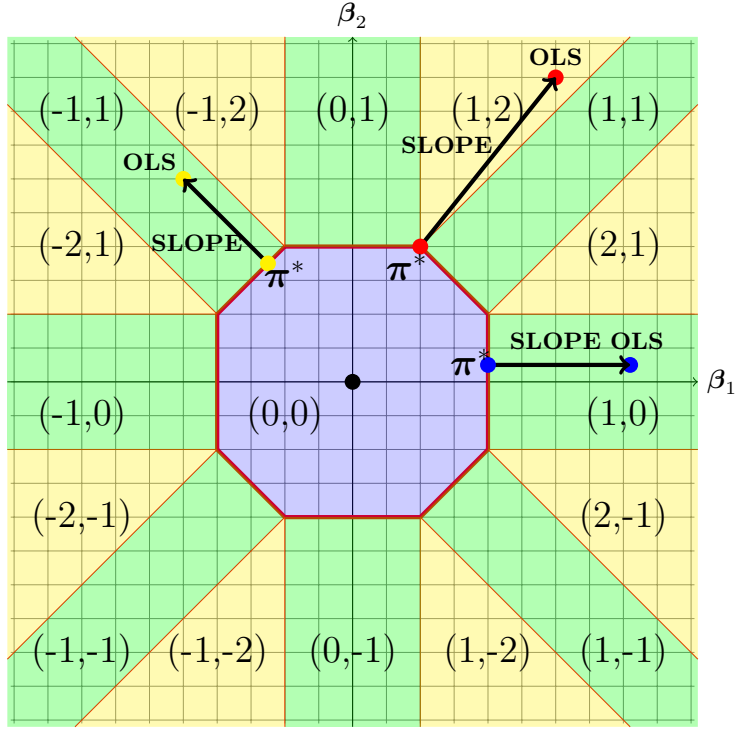
\begin{figure}[ht]
		\centering
		\begin{tikzpicture}[scale=0.45]%[domain=0:4] %bylo scale=0.5
			\draw[very thin,color=gray] (-10.1,-10.1) grid (10.1,10.1);
			\draw[->] (-10.2,0) -- (10.2,0) node[right] {$\bbeta_1$};
			\draw[->] (0,-10.2) -- (0,10.2) node[above] {$\bbeta_2$};
			\begin{scope}[ultra thick]
				\draw[color=red] (4,2) -- (2,4) -- (-2,4) -- (-4,2) -- (-4,-2) -- (-2,-4) -- (2,-4) -- (4,-2) -- (4,2);
				\fill[blue,fill opacity=0.2] (4,2) -- (2,4) -- (-2,4) -- (-4,2) -- (-4,-2) -- (-2,-4) -- (2,-4) -- (4,-2) -- (4,2);
			\end{scope}
			\draw[color=red] (4,2) -- (10.2,2); 
			\draw[color=red] (4,-2) -- (10.2,-2);
			\draw[color=red] (4,2) -- (10.2,8.2); 
			\draw[color=red] (2,4) -- (8.2,10.2); 
			\draw[color=red] (2,4) -- (2,10.2); 
			\draw[color=red] (-2,4) -- (-2,10.2); 
			\draw[color=red] (-2,4) -- (-8.2,10.2); 
			\draw[color=red] (-4,2) -- (-10.2,8.2); 
			\draw[color=red] (-4,2) -- (-10.2,2); 
			\draw[color=red] (-4,-2) -- (-10.2,-2); 
			\draw[color=red] (-4,-2) -- (-10.2,-8.2); 
			\draw[color=red] (-2,-4) -- (-8.2,-10.2); 
			\draw[color=red] (-2,-4) -- (-2,-10.2); 
			\draw[color=red] (2,-4) -- (2,-10.2); 
			\draw[color=red] (4,-2) -- (10.2,-8.2); 
			\draw[color=red] (2,-4) -- (8.2,-10.2); 
			\fill[yellow,fill opacity=0.3] (4,2)--(10.2,2)--(10.2,8.2);
			\fill[yellow,fill opacity=0.3] (2,4)--(2,10.2)--(8.2,10.2);
			\fill[yellow,fill opacity=0.3] (-2,4)--(-2,10.2)--(-8.2,10.2);
			\fill[yellow,fill opacity=0.3] (-4,2)--(-10.2,2)--(-10.2,8.2);
			\fill[yellow,fill opacity=0.3] (-4,-2)--(-10.2,-2)--(-10.2,-8.2);
			\fill[yellow,fill opacity=0.3] (-2,-4)--(-2,-10.2)--(-8.2,-10.2);
			\fill[yellow,fill opacity=0.3] (2,-4)--(2,-10.2)--(8.2,-10.2);
			\fill[yellow,fill opacity=0.3] (4,-2)--(10.2,-2)--(10.2,-8.2);
			\fill[green,fill opacity=0.3] (4,2)--(10.2,2)--(10.2,-2)--(4,-2);
			\fill[green,fill opacity=0.3] (4,2)--(10.2,8.2)--(10.2,10.2)--(8.2,10.2)--(2,4);
			\fill[green,fill opacity=0.3] (2,4)--(2,10.2)--(-2,10.2)--(-2,4);
			\fill[green,fill opacity=0.3] (-2,4)--(-8.2,10.2)--(-10.2,10.2)--(-10.2,8.2)--(-4,2);
			\fill[green,fill opacity=0.3] (-4,2)--(-10.2,2)--(-10.2,-2)--(-4,-2);
			\fill[green,fill opacity=0.3] (-4,-2)--(-10.2,-8.2)--(-10.2,-10.2)--(-8.2,-10.2)--(-2,-4);
			\fill[green,fill opacity=0.3] (2,-4)--(2,-10.2)--(-2,-10.2)--(-2,-4);
			\fill[green,fill opacity=0.3] (4,-2)--(10.2,-8.2)--(10.2,-10.2)--(8.2,-10.2)--(2,-4);
			\draw (8.2,0.5) node[color=blue,circle,fill,inner sep=0pt,minimum size=6pt]{};
			\draw [ultra thick,->](4,0.5) -- (8.2,0.5);
			\draw (4,0.5) node[color=blue,circle,fill,inner sep=0pt,minimum size=6pt]{};
			\draw (3.5,0.6) node[color=black]{\Large{$\bf{\bpi^{*}}$}};
			\draw (6,1.2) node[color=black]{\small{\bf{SLOPE}}};
			\draw (8.4,1.2) node[color=black]{\small{\bf{OLS}}};
			\draw (0,0) node[color=black,circle,fill,inner sep=0pt,minimum size=6pt]{};
			\draw (-5,6) node[color=yellow,circle,fill,inner sep=0pt,minimum size=6pt]{};
			\draw [ultra thick,->](-2.5,3.5) -- (-5,6);
			\draw (-2.5,3.5) node[color=yellow,circle,fill,inner sep=0pt,minimum size=6pt]{};
			\draw (-2,3.2) node[color=black]{\Large{$\bf{\bpi^{*}}$}};
			\draw (-4.6,4.1) node[color=black]{\small{\bf{SLOPE}}};
			\draw (-6,6.5) node[color=black]{\small{\bf{OLS}}};
			\draw (6,9) node[color=red,circle,fill,inner sep=0pt,minimum size=6pt]{};
			\draw [ultra thick,->](2,4) -- (6,9);
			\draw (2,4) node[color=red,circle,fill,inner sep=0pt,minimum size=6pt]{};
			\draw (1.6,3.4) node[color=black]{\Large{$\bf{\bpi^{*}}$}};
			\draw (2.79,7) node[color=black]{\small{\bf{SLOPE}}};
			\draw (6,9.6) node[color=black]{\small{\bf{OLS}}};
			\draw (-8,4) node[color=black]{\Large{(-2,1)}};
			\draw (-4,8) node[color=black]{\Large(-1,2)};
			\draw (4,8) node[color=black]{\Large(1,2)};
			\draw (8,4) node[color=black]{\Large(2,1)};
			\draw (8,-4) node[color=black]{\Large(2,-1)};
			\draw (4,-8) node[color=black]{\Large(1,-2)};
			\draw (-4,-8) node[color=black]{\Large(-1,-2)};
			\draw (-8,-4) node[color=black]{\Large(-2,-1)};
			\draw (-8,-1) node[color=black]{\Large(-1,0)};
			\draw (-8,8) node[color=black]{\Large(-1,1)};
			\draw (0,8) node[color=black]{\Large(0,1)};
			\draw (8,8) node[color=black]{\Large(1,1)};
			\draw (8,-1) node[color=black]{\Large(1,0)};
			\draw (8,-8) node[color=black]{\Large(1,-1)};
			\draw (0,-8) node[color=black]{\Large(0,-1)};
			\draw (-8,-8) node[color=black]{\Large(-1,-1)};
			\draw (-2,-1) node[color=black]{\Large(0,0)};
		\end{tikzpicture}
		\caption{The dual unit ball $C_{{\mathbf \Lambda}/n}$ for $\bLambda=(200,100)'$ and examples of $\hat\bbeta^{SLOPE}$ and~$\hat\bbeta^{OLS}$ in~the~orthogonal design for $n=50$ and $p=2$. The labels of each colored set refer to~the~pattern of $\hat\bbeta^{SLOPE}$ for $\hat\bbeta^{OLS}$ lying in this set. The arrows point from $(\hat\bbeta^{OLS}-\hat\bbeta^{SLOPE})$ to~$\hat\bbeta^{OLS}$.}
		\label{fig:OLSonSLOPE}
	\end{figure}
	In~the~orthogonal design, one may also explicitly compute the SLOPE estimator. Indeed, by~the~Corollary~\ref{cor:SLOPEvsOLS}, $\hat \bbeta^{SLOPE}$ is the image of $\hat \bbeta^{OLS}$ by~the~proximal operator of the SLOPE norm. Therefore, this operator has a closed form formula~\cite{MBslope,simple,prox}. This explicit expression gives an analytical way to learn that SLOPE solution is sparse and~built of clusters.
	
	%\newpage

	%%%%%%%%%%%%%%%%%%%%%%%%%%%%%%%%%%%%%%%%%%%%%%%%%%%%%%%%%%%%%%%%%%%%%%%%%%%%%
	
	\begin{lemma}~\label{lemat 1}In the linear model satisfying $\frac1n \bX'\bX=\bI_p$ we have
		% \begin{lemma}~\label{lemat 1}In the linear model with orthogonal design $\frac1n \bX'\bX=\bI_p$ we have
			\begin{equation} \label{equality of slopes}
				\operatorname*{arg\,min}_{{\mathbf b} \in \mathbb{R}^p}  \left [ \,\frac{1}{2} \left\|\bY-\bX\bb \right\|_2^2+ J_{\mathbf \Lambda}(\bb)\, \right ]  =  \operatorname*{arg\,min}_{{\mathbf b} \in \mathbb{R}^p}  \left [ \,\frac{1}{2} \left\|\widehat{\bbeta}^{\ols} - \bb \right\|_2^2+ J_{\mathbf \Lambda}(\bb)\, \right ].
			\end{equation}	
		\end{lemma}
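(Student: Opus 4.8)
The plan is to verify the identity by reducing both optimisation problems to the same one: I would show that, as functions of $\bb$, the two bracketed objectives in~\eqref{equality of slopes} differ only by an additive term that is independent of $\bb$. Since the penalty $J_{\mathbf \Lambda}(\bb)$ is literally the same on both sides, it may be left untouched, so the whole argument reduces to comparing the two quadratic data-fit terms. The only inputs are the orthogonality relation $\bX'\bX=n\bI_p$ and the identity $\bX'\bY=n\,\widehat{\bbeta}^{\ols}$ that follows from $\widehat{\bbeta}^{\ols}=\tfrac1n\bX'\bY$; in particular, none of the saddle-point machinery of Section~\ref{Section Saddle point} is needed, which is precisely the simplification suggested by the referee.

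Concretely, first I would expand
\[
\tfrac{1}{2}\|\bY-\bX\bb\|_2^2 = \tfrac{1}{2}\bY'\bY - \bb'\bX'\bY + \tfrac{1}{2}\bb'\bX'\bX\bb ,
\]
then substitute $\bb'\bX'\bX\bb=n\|\bb\|_2^2$ into the quadratic term and $\bb'\bX'\bY=n\,\bb'\widehat{\bbeta}^{\ols}$ into the linear term, and finally complete the square in $\bb$ to obtain
\[
\tfrac{1}{2}\|\bY-\bX\bb\|_2^2 = \tfrac{n}{2}\,\|\widehat{\bbeta}^{\ols}-\bb\|_2^2 + \Big(\tfrac{1}{2}\bY'\bY-\tfrac{n}{2}\|\widehat{\bbeta}^{\ols}\|_2^2\Big),
\]
the parenthesised quantity being constant in $\bb$. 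Adding $J_{\mathbf \Lambda}(\bb)$ and dropping that constant, which cannot move the location of the minimiser, identifies the left-hand optimisation with the minimisation of $\tfrac{n}{2}\|\widehat{\bbeta}^{\ols}-\bb\|_2^2 + J_{\mathbf \Lambda}(\bb)$.

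The one quantity to track carefully is the scalar $n$ multiplying the squared distance. Dividing the objective by $n$ leaves the $\operatorname*{arg\,min}$ unchanged while turning the quadratic into the standard proximal form $\tfrac12\|\widehat{\bbeta}^{\ols}-\bb\|_2^2$ and rescaling the penalty to $J_{\mathbf \Lambda/n}$; this is exactly the normalisation recorded in Corollary~\ref{cor:SLOPEvsOLS}, where the shrinkage $\widehat{\bbeta}^{\ols}-\widehat{\bbeta}^{SLOPE}$ lives in $C_{\mathbf \Lambda/n}$. Read at this scale, the two objectives of~\eqref{equality of slopes} coincide up to the additive constant above and hence share the same minimiser. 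That the minimiser is a single well-defined point — so that equality of the two $\operatorname*{arg\,min}$ sets is meaningful — follows because $\bX$ has full column rank (as $\bX'\bX=n\bI_p$ is invertible), making $\bb\mapsto\tfrac12\|\bY-\bX\bb\|_2^2$ strictly convex, while $J_{\mathbf \Lambda}$ is a convex gauge; their sum is strictly convex and coercive, so the main obstacle is really just this bookkeeping of the normalisation rather than any genuine difficulty.
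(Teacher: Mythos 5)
Your proof is correct and follows essentially the same route as the paper's: both expand the squared norms and observe that the two objectives differ only by an additive term independent of $\bb$, so the minimisers coincide. In fact your bookkeeping is more careful than the printed proof, which silently drops the factor $n$ from $\bb'\bX'\bX\bb=n\|\bb\|_2^2$ (and the $1/n$ in $\widehat{\bbeta}^{\ols}=\tfrac1n\bX'\bY$); as you note, after dividing by $n$ the right-hand problem is really the proximal problem with the rescaled penalty $J_{\mathbf\Lambda/n}$, which is exactly the form in which the lemma is invoked in Theorem~\ref{FAKT 2} and Corollary~\ref{cor:SLOPEvsOLS}.
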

		As~\eqref{equality of slopes} is not proven in~\cite[Equation (1.14)]{MBtheory}, we give the proof in the Appendix.
		The next theorem gives a sufficient condition for the clustering effect of the SLOPE estimator in the orthogonal design.
		
		\begin{theorem}
			\label{FAKT 2}
			Consider a linear model with orthogonal design 
			\\ $\frac1n \bX' \bX=\bI_p$. Let  $\pi$ be~a~permutation of $(1,2,\ldots,p)$ such that $$
			\left|\widehat{\bbeta}_{\pi(1)}^{\ols}\right| \ge \left|\widehat{\bbeta}_{\pi(2)}^{\ols}\right| \ge \ldots \ge \left|\widehat{\bbeta}_{\pi(p)}^{\ols}\right|.$$
			For $i \in \{1,2,\ldots,p-1\}$,\\ if $\left|\widehat{\bbeta}_{\pi(i)}^{\ols}\right| - \left|\widehat{\bbeta}_{\pi(i+1)}^{\ols}\right| \leq \frac{\lambda_i - \lambda_{i+1}}{n}$, then $\left|\widehat{\bbeta}^{\slope}_{\pi(i)}\right|=\left|\widehat{\bbeta}^{\slope}_{\pi(i+1)}\right|$.
		\end{theorem}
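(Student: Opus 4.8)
The plan is to combine the proximal representation of SLOPE in the orthogonal design with a first-order \emph{exchange} argument. By Lemma~\ref{lemat 1}, together with $\bX'\bX = n\bI_p$ and $\hat\bbeta^{\ols} = \tfrac1n\bX'\bY$, expanding $\|\bY-\bX\bb\|_2^2$ shows that $\hat\bbeta^{\slope}$ minimizes, up to an additive constant independent of $\bb$, the function
\[
F(\bb) = \frac n2\left\|\hat\bbeta^{\ols}-\bb\right\|_2^2 + J_{\mathbf \Lambda}(\bb).
\]
I would use two structural facts about this minimizer, both already implicit in Lemma~\ref{lemma 2 bis}: \emph{sign preservation}, $\sign(\hat\beta^{\slope}_j)\in\{0,\sign(\hat\beta^{\ols}_j)\}$ (from part (a) applied to $n\hat\bbeta^{\ols}=n\bbeta^*+\bpi^*$), and \emph{order preservation}, namely that $|\hat\beta^{\ols}_{\pi(1)}|\ge\ldots\ge|\hat\beta^{\ols}_{\pi(p)}|$ forces $|\hat\beta^{\slope}_{\pi(1)}|\ge\ldots\ge|\hat\beta^{\slope}_{\pi(p)}|$ (from parts (a) and (b), since then $n|\hat\beta^{\ols}_j| = n|\beta^*_j|+|\pi^*_j|$ with $|\beta^*|$ and $|\pi^*|$ similarly sorted). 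In particular $\pi$ sorts the magnitudes of $\hat\bbeta^{\slope}$ as well.

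Fix an $i$ satisfying the hypothesis and argue by contradiction, assuming $|\hat\beta^{\slope}_{\pi(i)}|>|\hat\beta^{\slope}_{\pi(i+1)}|$ (so in particular $\hat\beta^{\slope}_{\pi(i)}\neq0$). For small $\epsilon>0$ I would perturb the minimizer by shrinking coordinate $\pi(i)$ and growing coordinate $\pi(i+1)$ along their signs, $\bb(\epsilon)=\hat\bbeta^{\slope}-\epsilon s_i\, e_{\pi(i)}+\epsilon s_{i+1}\, e_{\pi(i+1)}$, where $s_j=\sign(\hat\beta^{\ols}_{\pi(j)})$ and $e_\ell$ is the $\ell$-th standard basis vector. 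Because the strict inequality persists for small $\epsilon$, the magnitude of $\pi(i)$ drops by $\epsilon$ (keeping rank $i$) and that of $\pi(i+1)$ rises by $\epsilon$ (keeping rank $i+1$), so
\[
\left.\frac{d}{d\epsilon}J_{\mathbf \Lambda}(\bb(\epsilon))\right|_{0^+} = -(\lambda_i-\lambda_{i+1}).
\]
Using sign preservation, the quadratic term contributes $n\left[(|\hat\beta^{\ols}_{\pi(i)}|-|\hat\beta^{\slope}_{\pi(i)}|)-(|\hat\beta^{\ols}_{\pi(i+1)}|-|\hat\beta^{\slope}_{\pi(i+1)}|)\right]$.

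Adding the two contributions, the right derivative of $F$ at $\hat\bbeta^{\slope}$ along this direction equals
\[
n\left(|\hat\beta^{\ols}_{\pi(i)}|-|\hat\beta^{\ols}_{\pi(i+1)}|\right)-(\lambda_i-\lambda_{i+1})-n\left(|\hat\beta^{\slope}_{\pi(i)}|-|\hat\beta^{\slope}_{\pi(i+1)}|\right).
\]
The hypothesis $|\hat\beta^{\ols}_{\pi(i)}|-|\hat\beta^{\ols}_{\pi(i+1)}|\le(\lambda_i-\lambda_{i+1})/n$ makes the first two terms $\le0$, while the contradiction assumption makes the last term strictly negative; hence the derivative is $<0$, so $F(\bb(\epsilon))<F(\hat\bbeta^{\slope})$ for small $\epsilon>0$, contradicting optimality. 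Therefore $|\hat\beta^{\slope}_{\pi(i)}|=|\hat\beta^{\slope}_{\pi(i+1)}|$.

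The step I expect to need the most care is the exact one-sided derivative of the sorted penalty $J_{\mathbf \Lambda}$ along the exchange direction, specifically verifying that ties between $|\hat\beta^{\slope}_{\pi(i)}|$ and $|\hat\beta^{\slope}_{\pi(i-1)}|$, or between $|\hat\beta^{\slope}_{\pi(i+1)}|$ and $|\hat\beta^{\slope}_{\pi(i+2)}|$, do not disturb the coefficient $-(\lambda_i-\lambda_{i+1})$: shrinking the larger member of a tied block and enlarging the smaller one only relabels which coordinate carries which of the (equal) weights, leaving the rate unchanged. I would also record separately the degenerate cases $\hat\beta^{\ols}_{\pi(i)}=0$ and $\hat\beta^{\slope}_{\pi(i+1)}=0$, where sign and order preservation make the claimed equality immediate, and track the factor $n$ consistently throughout, since it is exactly what converts the raw gap $\lambda_i-\lambda_{i+1}$ into the threshold $(\lambda_i-\lambda_{i+1})/n$.
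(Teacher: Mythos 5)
Your argument is correct, but it takes a genuinely different route from the paper. The paper's entire proof is the reduction via Lemma~\ref{lemat 1} to the statement that $\widehat{\bbeta}^{\slope}$ is the proximal map of $J_{\mathbf{\Lambda}/n}$ at $\widehat{\bbeta}^{\ols}$, followed by a citation of Lemma~2.3 of Bogdan et al.\ (the SLOPE paper), which contains exactly this clustering property of the sorted-$\ell_1$ prox. You perform the same reduction but then prove the clustering property from scratch: you extract sign preservation and order preservation from Lemma~\ref{lemma 2 bis} via the identity $n\widehat{\bbeta}^{\ols}=n\bbeta^*+\bpi^*$, and then run a first-order exchange argument showing that if $\lt|\widehat{\beta}^{\slope}_{\pi(i)}\rt|>\lt|\widehat{\beta}^{\slope}_{\pi(i+1)}\rt|$ the one-sided directional derivative of the objective along the shrink-and-grow direction is strictly negative. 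The computation is right, including the factor of $n$ (your $F$ is $n$ times the prox objective for $J_{\mathbf{\Lambda}/n}$, which is where the threshold $(\lambda_i-\lambda_{i+1})/n$ comes from), and you correctly flag the only delicate point, namely that ties with $\pi(i-1)$ or $\pi(i+2)$ do not change the rate $-(\lambda_i-\lambda_{i+1})$ because the perturbed coordinates retain ranks $i$ and $i+1$. What each approach buys: the paper's proof is two lines but opaque, deferring the mechanism to an external reference; yours is self-contained and makes visible why clustering happens (the marginal penalty saving $\lambda_i-\lambda_{i+1}$ dominates the marginal quadratic cost $n(|\widehat{\beta}^{\ols}_{\pi(i)}|-|\widehat{\beta}^{\ols}_{\pi(i+1)}|)$). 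One small correction: the degenerate case $\widehat{\beta}^{\ols}_{\pi(i+1)}=0$ is not ``immediate'' as you claim --- sign and order preservation only give $\widehat{\beta}^{\slope}_{\pi(i+1)}=0$, and you still must force $\widehat{\beta}^{\slope}_{\pi(i)}=0$; but the same derivative computation with the single-coordinate direction $-\epsilon s_i e_{\pi(i)}$ (penalty rate $-\lambda_i$) yields a strictly negative derivative under the hypothesis, so the case closes the same way.
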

		%%%%%%%%%%%%%%%%%%%%%
		\begin{proof}
			By Lemma \ref{lemat 1}, in the orthogonal design, $\widehat{\bbeta}^{\slope}$ is proximal map of $J_{\mathbf{\Lambda}/n}(\cdot)$ at~$\hat\bbeta^{OLS}$. The~result may~be~inferred from~\cite[Lemma 2.3]{MBslope}.
			% 	By Lemma \ref{lemat 1}, in the orthogonal design, the calculation of SLOPE
			% 	 reduces~to the~application of~the proximal algorithm of SLOPE to the~$\hat\bbeta^{OLS}$. The~result may~be~inferred from~\cite[Lemma 2.3]{MBslope}.
		\end{proof}
		In the following theorem we  derive necessary and sufficient
		conditions under which  SLOPE  in the orthogonal design 
		recovers the support of the vector $\bbeta=(\bbeta_1,\ldots, \bbeta_p)'$, i.e.
		$$\quad\widehat{\bbeta}^{\slope}_i=0 \iff \bbeta_i=0.$$
		% In the following theorem we  derive necessary and sufficient
		% conditions under which  SLOPE  in the orthogonal design 
		% recovers the support of the vector\\ $\bbeta=(\bbeta_1,\ldots, \bbeta_p)'$, i.e.
		% \begin{equation*}
			% \widehat{\bbeta}^{\slope}_i=0 \iff \bbeta_i=0.
			% \end{equation*}
		
		\begin{theorem}
			\label{supportON}
			Under orthogonal design $\frac1n \bX'\bX=\bI_p$, let  $\pi$ be a permutation of $(1,2,\ldots,p)$ satisfying $\lt|\widehat{\bbeta}_{\pi(1)}^{\ols}\rt| \ge |\widehat{\bbeta}_{\pi(2)}^{\ols}| \ge \ldots \ge |\widehat{\bbeta}_{\pi(p)}^{\ols}|$. Without loss of generality suppose that $supp(\bbeta)=\{1,2,\ldots,p_0\}$ with $p_0<p$. 
			The necessary and sufficient condition for SLOPE to identify the~set of~relevant covariables is:% the following: 
			\begin{enumerate}
				\item[]{(a)}  \label{a} $\displaystyle \min_{1 \leq i \leq p_0} \lt|\widehat{\bbeta}_{i}^{\ols}\rt| > \max_{p_0+1 \leq i \leq p} \lt|\widehat{\bbeta}_{i}^{\ols}\rt|$, 
				\item[]{(b)} \label{b} $\sum\limits_{i=k}^{p_0} \lt|\widehat{\bbeta}_{\pi(i)}^{\ols}\rt| > \frac1n \sum\limits_{i=k}^{p_0} \lambda_i, \quad \mbox{\rm for} \quad k=1,2,\ldots,p_0$,
				\item[]{(c)} $\sum\limits_{i=p_0+1}^k \lt|\widehat{\bbeta}_{\pi(i)}^{\ols}\rt| \leq \frac1n \sum\limits_{i=p_0+1}^k \lambda_i, \quad \mbox{\rm for} \quad k=p_0+1,p_0+2,\ldots,p.$  
			\end{enumerate}
		\end{theorem}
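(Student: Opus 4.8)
The plan is to reduce the statement to the structure of the SLOPE proximal operator and then read off the support of $\widehat{\bbeta}^{\slope}$ from cumulative-sum conditions. By Corollary~\ref{cor:SLOPEvsOLS} (together with Lemma~\ref{lemat 1}), in the orthogonal design $\widehat{\bbeta}^{\slope}=\widehat{\bbeta}^{\ols}-\operatorname*{arg\,min}_{\bpi\in C_{\mathbf\Lambda/n}}\|\widehat{\bbeta}^{\ols}-\bpi\|_2^2$, so $\widehat{\bbeta}^{\slope}$ is the proximal image of $\widehat{\bbeta}^{\ols}$ under $J_{\mathbf\Lambda/n}$, where $\mu_i:=\lambda_i/n$; equivalently, $\widehat{\bbeta}^{\slope}_i=0$ precisely when the projection onto $C_{\mathbf\Lambda/n}$ leaves the $i$-th coordinate unchanged. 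Everything therefore reduces to deciding which coordinates this proximal map kills. I would first record the two properties the map inherits from the SLOPE pattern: it preserves signs, and it is order- and cluster-preserving on absolute values, so that the same permutation $\pi$ that sorts $|\widehat{\bbeta}^{\ols}|$ also sorts $|\widehat{\bbeta}^{\slope}|$ (the mechanism already used for Theorem~\ref{FAKT 2}). Consequently the support of $\widehat{\bbeta}^{\slope}$ is always an initial segment $\{\pi(1),\dots,\pi(m)\}$ in the sorted order.

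The role of (a) is to force this initial segment to coincide with the true support. If some on-support index $i\le p_0$ and off-support index $j>p_0$ had $|\widehat{\bbeta}^{\ols}_i|=|\widehat{\bbeta}^{\ols}_j|$, cluster preservation would give $|\widehat{\bbeta}^{\slope}_i|=|\widehat{\bbeta}^{\slope}_j|$, so these coordinates are simultaneously zero or nonzero and recovery must fail on one of them; hence the strict separation $\min_{i\le p_0}|\widehat{\bbeta}^{\ols}_i|>\max_{i>p_0}|\widehat{\bbeta}^{\ols}_i|$ is necessary, and under (a) we may choose $\pi$ with $\{\pi(1),\dots,\pi(p_0)\}=\{1,\dots,p_0\}=\supp(\bbeta)$. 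This turns the claim into: the sorted proximal map has exactly $m=p_0$ nonzero coordinates if and only if (b) and (c) hold.

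For this last step I would work with the sorted scalars $v_i:=|\widehat{\bbeta}^{\ols}_{\pi(i)}|$, $\mu_i=\lambda_i/n$, and $x_i:=|\widehat{\bbeta}^{\slope}_{\pi(i)}|$, and exploit a decoupling: as soon as $x_{p_0}>0=x_{p_0+1}$, the monotonicity constraint linking the head $(x_1,\dots,x_{p_0})$ and the tail $(x_{p_0+1},\dots,x_p)$ is strictly inactive, so the proximal problem splits into two independent problems. The tail problem returns $\bnull$ exactly when the tail vector already lies inside its own dual ball, i.e. $\sum_{i=p_0+1}^{k}v_i\le\sum_{i=p_0+1}^{k}\mu_i$ for every $k$, which is (c). On the head the nonnegativity constraint is then inactive, so the solution is the nonincreasing isotonic fit of $(v_i-\mu_i)$, whose smallest coordinate equals the minimal suffix average, $x_{p_0}=\min_{1\le k\le p_0}\frac{1}{p_0-k+1}\sum_{i=k}^{p_0}(v_i-\mu_i)$; positivity of this quantity is exactly $\sum_{i=k}^{p_0}v_i>\sum_{i=k}^{p_0}\mu_i$ for every $k$, which is (b). Running the argument forward gives sufficiency (assuming (a)--(c), the candidate supported on $\{1,\dots,p_0\}$ satisfies the proximal optimality $\widehat{\bbeta}^{\ols}-b\in\partial J_{\mathbf\Lambda/n}(b)$, with (b) and (c) furnishing a dual certificate in $C_{\mathbf\Lambda/n}$); running it backward gives necessity.

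The main obstacle is precisely the non-separability produced by the pooling/clustering of the SLOPE prox: the zero/nonzero split cannot be decided coordinate by coordinate, so one must justify both the head--tail decoupling and the suffix-average formula for the last positive coordinate. I would isolate these as the two technical facts carrying the proof --- the ``tail lies in its dual ball'' criterion (immediate from $\operatorname{prox}=\mathrm{Id}-\mathrm{proj}$ and the description~\eqref{dualball} of $C_{\mathbf\Lambda}$) and the isotonic last-coordinate identity --- both available from the proximal analysis of~\cite{MBslope} and the subdifferential description of $J_{\mathbf\Lambda}$ recalled earlier.
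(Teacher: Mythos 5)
Your proposal is correct and follows essentially the same route as the paper: the paper's proof consists of reducing to the proximal operator of $J_{\mathbf\Lambda/n}$ at $\widehat{\bbeta}^{\ols}$ via Lemma~\ref{lemat 1} and then citing the known properties of the proximal SLOPE (\cite[Lemmas 2.3 and 2.4]{MBtheory}). You simply re-derive in detail (sign/order preservation, necessity of (a), head--tail decoupling, the isotonic suffix-average criterion) what the paper delegates to that citation.
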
 
		
		\begin{proof}
			The result may be inferred from the properties of the proximal SLOPE 
			\cite[Lemma 2.3 and Lemma 2.4]{MBtheory} and from Lemma \ref{lemat 1}.
		\end{proof}	
		
		\section{Asymptotic properties of SLOPE}
		\label{sec:asymp}
		In this section we discuss several asymptotic properties of SLOPE estimators in the low-dimensional regression model in which $p$ is  fixed   and the sample size $n$ tends to infinity.
		For~each $n \ge 1$ we consider a linear model 
		\begin{equation}\label{reg2}
			\bY^{(n)}=\bX^{(n)}\bbeta + \beps^{(n)},
		\end{equation}
		where $\bY^{(n)}=(y_1^{(n)},y_2^{(n)},\ldots,y_n^{(n)})' \in \R^{n}$ is a vector of observations, $\bX^{(n)}\in\R^{n\times p}$ is a~deterministic design matrix with $\mbox{rank}(\bX^{(n)})=p$, $\bbeta =(\beta_1,\beta_2,\ldots,\beta_p)' \in \R^p$ is a vector of~unknown regression coefficients and  $\beps^{(n)}=(\eps^{(n)}_1,\eps^{(n)}_2, \ldots,\eps^{(n)}_n)'~\in~\R^n$ is a noise term, which has the~normal distribution $N(0, \sigma^2 \bI_n)$. We make no assumptions about the dependence between $\beps^{(n)}$ and $\beps^{(m)}$ for $n\neq m$. In particular, $\beps^{(n)}$ does not need to~be~a~subsequence of~$\beps^{(m)}$.
		\\
		When defining  the sequence $(\widehat{\bbeta}_n^{SLOPE})$ of SLOPE estimators,  we  assume that  the tuning vector varies with $n$. More precisely, for~each $n \ge 1$ its coefficients $\lambda_{1}^{(n)}~\ge~\lambda_{2}^{(n)}~\ge~\ldots~\ge~\lambda_{p}^{(n)}~\ge~0$ are fixed and $\lambda_{1}^{(n)}>0$.
		By $\widehat{\bbeta}_n^{SLOPE}$ we denote the SLOPE estimator corresponding~to the~tuning vector $\bLambda^{(n)}=(\lambda_{1}^{(n)},\ldots,\lambda_{p}^{(n)})'$:
		\begin{equation} \label{slope for consistency}
			\widehat{\bbeta}_n^{SLOPE} = \operatorname*{arg\,min}_{{\mathbf b} \in \mathbb{R}^p}  \left [ \,\frac{1}{2} \lt\|\bY^{(n)}-\bX^{(n)}\bb \rt\|_2^2+ J_{{\mathbf \Lambda}^{(n)}}(\bb)\, \right ]. 
		\end{equation}
		
		\subsection{Strong consistency of the SLOPE estimator}\label{sec:StrongCons}
		Let us recall the definition of a strongly consistent estimator \mbox{\boldmath{$\betaSLOPE_n$}} of $\bbeta$, i.e. $\forall \bbeta\in \R^p$ we have \mbox{\boldmath{$\betaSLOPE_n$}}$\to \bbeta$ almost surely.
		
		Below we discuss consistency of the sequence $(\widehat{\bbeta}_n^{SLOPE})$ of SLOPE estimators, defined by~\eqref{slope for consistency}. 
		
		\begin{theorem}\label{strong consistency of slope} Consider the linear regression model \eqref{reg2} and assume that %the condition (\ref{assumption on design}) holds. 
			$$
			\lim\limits_n n^{-1} \lt(\bX^{(n)}\rt)'\bX^{(n)} = \bC,
			$$ 
			where $\bC$ is a positive definite matrix. Let $\widehat{\bbeta}_n^{SLOPE}$, $n \ge 1$,  be the SLOPE estimator corresponding to the tuning vector $\bLambda^{(n)}=(\lambda_{1}^{(n)},\lambda_{2}^{(n)},\ldots,\lambda_{p}^{(n)})'$.
			\begin{enumerate}[label=(\alph*)]
				\item If $\displaystyle \lim_{n \rightarrow \infty}  \frac{\lambda_{1}^{(n)}}{n} = 0$, then $\widehat{\bbeta}^{SLOPE}_{n}\stackrel{a.s.}{\longrightarrow} \bbeta$.
				\item If $\displaystyle \lim_{n \rightarrow \infty}  \frac{\lambda_{1}^{(n)}}{n} = \lambda_0>0$, then $\widehat{\bbeta}^{SLOPE}_{n}$ is not strongly consistent for $\bbeta$.
			\end{enumerate}
			
		\end{theorem}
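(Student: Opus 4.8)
The plan is to reduce both parts to the convergence of minimizers of a sequence of convex functions, using the ordinary least squares estimator as a pivot. Writing $\bA_n:=(\bX^{(n)})'\bX^{(n)}$ and $\widehat{\bbeta}_n^{OLS}=\bA_n^{-1}(\bX^{(n)})'\bY^{(n)}$, the orthogonality of the OLS residuals gives, for every $\bb$,
$$\tfrac12\|\bY^{(n)}-\bX^{(n)}\bb\|_2^2=\tfrac12\|\bY^{(n)}-\bX^{(n)}\widehat{\bbeta}_n^{OLS}\|_2^2+\tfrac12(\bb-\widehat{\bbeta}_n^{OLS})'\bA_n(\bb-\widehat{\bbeta}_n^{OLS}),$$
so the first summand is independent of $\bb$ and, after dividing the criterion by $n$,
$$\widehat{\bbeta}_n^{SLOPE}=\operatorname*{arg\,min}_{\bb\in\R^p}\Big[\tfrac12(\bb-\widehat{\bbeta}_n^{OLS})'(n^{-1}\bA_n)(\bb-\widehat{\bbeta}_n^{OLS})+n^{-1}J_{\mathbf \Lambda^{(n)}}(\bb)\Big].$$
This is the object whose minimizer I would track as $n\to\infty$, using the three a.s. limits $n^{-1}\bA_n\to\bC$ (given), $\widehat{\bbeta}_n^{OLS}\to\bbeta$, and the behaviour of $n^{-1}J_{\mathbf \Lambda^{(n)}}$.

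The strong consistency of OLS is the one input where the non-nestedness of the noise must be handled: since no relation is assumed between $\beps^{(n)}$ and $\beps^{(m)}$, I would avoid any strong law and instead use Borel--Cantelli. From $\widehat{\bbeta}_n^{OLS}-\bbeta=\bA_n^{-1}(\bX^{(n)})'\beps^{(n)}$, whose covariance is $\sigma^2\bA_n^{-1}$, each coordinate is centered Gaussian with variance $\sigma^2(\bA_n^{-1})_{jj}$ of order $1/n$ (because $n^{-1}\bA_n\to\bC>0$). The Gaussian tail bound then makes $\sum_n\P(\|\widehat{\bbeta}_n^{OLS}-\bbeta\|_\infty>\eps)$ summable for every $\eps>0$, giving $\widehat{\bbeta}_n^{OLS}\to\bbeta$ a.s.\ irrespective of the joint law of the $\beps^{(n)}$.

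For part (a), the penalty is controlled by $n^{-1}J_{\mathbf \Lambda^{(n)}}(\bb)\le(\lambda_1^{(n)}/n)\|\bb\|_1\to0$ uniformly on compacta. Hence the bracketed objective converges a.s., uniformly on compact sets, to $\tfrac12(\bb-\bbeta)'\bC(\bb-\bbeta)$, which is strictly convex (as $\bC>0$) and uniquely minimized at $\bb=\bbeta$. Convexity of all objectives upgrades pointwise a.s.\ convergence to local uniform convergence, and a standard convexity argument for the convergence of minimizers---valid here because $n^{-1}\bA_n-\tfrac12\bC$ is positive definite for large $n$, which makes the objectives uniformly coercive and keeps the minimizers in a fixed compact set---yields $\widehat{\bbeta}_n^{SLOPE}\to\bbeta$ a.s.

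For part (b) the penalty no longer disappears. Since $(n^{-1}\lambda_i^{(n)})_{i}$ is bounded, along any subsequence I can extract a further subsequence on which $n^{-1}\lambda_i^{(n)}\to\ell_i$ for every $i$, with $\ell_1=\lambda_0>0$ and $\ell_1\ge\cdots\ge\ell_p\ge0$; along it $n^{-1}J_{\mathbf \Lambda^{(n)}}\to J_\ell$ uniformly on compacta, where $J_\ell(\bb)=\sum_i\ell_i|\bb|_{(i)}$ is a genuine norm since $J_\ell(\bb)\ge\ell_1\|\bb\|_\infty$. The same argmin argument then gives $\widehat{\bbeta}_n^{SLOPE}\to\bb^*:=\operatorname*{arg\,min}_\bb[\tfrac12(\bb-\bbeta)'\bC(\bb-\bbeta)+J_\ell(\bb)]$ along that subsequence, and the optimality condition $\bC(\bb^*-\bbeta)\in-\partial J_\ell(\bb^*)$ shows that $\bb^*=\bbeta$ would force $0\in\partial J_\ell(\bbeta)$, impossible for a norm unless $\bbeta=0$. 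Thus for any fixed $\bbeta\ne\bnull$ a subsequence of $\widehat{\bbeta}_n^{SLOPE}$ converges a.s.\ to a limit different from $\bbeta$, so $\widehat{\bbeta}_n^{SLOPE}\not\to\bbeta$ and the estimator is not strongly consistent. I expect the main obstacle to be making the argmin step rigorous---guaranteeing the minimizers remain in a common compact set and justifying convergence of $\operatorname*{arg\,min}$ from the (epi- or locally uniform) convergence of the convex objectives---together with the subsequence bookkeeping in (b), where the ratios $\lambda_i^{(n)}/n$ need not converge.
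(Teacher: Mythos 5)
Your argument is correct, but it follows a genuinely different route from the paper for both parts. You share with the paper the one probabilistic ingredient: strong consistency of OLS via Gaussian tails and Borel--Cantelli, which is exactly how the paper handles the non-nested noise (its Lemma 4.2). After that you run a Knight--Fu-style argmin-convergence argument: normalize the objective by $n$, show locally uniform a.s.\ convergence of the convex objectives to a strictly convex limit, and invoke convergence of minimizers. The paper instead exploits its earlier minimax machinery: for (a) it uses the representation $\widehat{\bbeta}^{SLOPE}_{n}=\widehat{\bbeta}^{OLS}_{n}-((\bX^{(n)})'\bX^{(n)})^{-1}\bpi^*_n$ with $\bpi^*_n\in C_{\mathbf \Lambda^{(n)}}$, so that $\|\bpi^*_n\|_\infty\le\lambda_1^{(n)}$ gives a deterministic bound on the SLOPE--OLS gap and the conclusion is immediate; for (b) it compares objective values, $0\le l(\bnull)-l(\widehat{\bbeta}^{SLOPE}_n)$, divides by $n$, and derives the inequality $\lambda_0\|\bbeta\|_\infty\le\tfrac12\bbeta'\bC\bbeta$, which fails for small nonzero $\bbeta$. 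What each approach buys: the paper's (a) is shorter and avoids any epi-convergence lemma, at the cost of relying on the dual/saddle-point representation; your (b) is stronger in that it exhibits, for \emph{every} $\bbeta\neq\bnull$, a subsequential a.s.\ limit $\bb^*\neq\bbeta$ via the optimality condition $0\in\bC(\bb^*-\bbeta)+\partial J_\ell(\bb^*)$ (note $0\in\partial J_\ell(\bb^*)$ forces $\bb^*=\bnull$ since $J_\ell\ge\lambda_0\|\cdot\|_\infty$ is a norm), whereas the paper only obtains failure near the origin and explicitly flags this limitation in its Remark 4.4.1. The two points you must make rigorous are the ones you already identified -- a.s.\ boundedness of the minimizers (your coercivity bound suffices, though the compact set is random) and the subsequence extraction for the possibly non-convergent ratios $\lambda_i^{(n)}/n$ -- and both are routine.
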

		Before proving the above theorems we start with stating a simple technical lemma. It follows quickly from the Borel-Cantelli Lemma and the tail inequality:\\%well-known tail inequality:\\ 
		If $Z\sim\mathrm{N}(0,1)$, then $\P(Z>t)\leq t^{-1}e^{-t^2/2}/\sqrt{2\pi}$,\quad $t>0$.
		\begin{lemma}\label{lem:gauss}
			Assume that $(Q_n)_{n\in\mathbb{N}}$ is a sequence of Gaussian random variables, defined on~the~same probability space, which converges in distribution to~$\mathrm{N}(0,\sigma^2)$ for some $\sigma\in(0,\infty)$. Then, for any $\delta>0$,
			\[
			\lim_{n\to\infty} \frac{Q_n}{(\log(n))^{1/2+\delta}}=0\quad\mbox{a.s.}
			\]
		\end{lemma}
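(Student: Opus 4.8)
The plan is to reduce the almost-sure statement to a summable-probability statement via the first Borel--Cantelli lemma, which is the right tool here precisely because it imposes no condition on the joint law of the $Q_n$ (and we are told to make none). First I would record the elementary distributional facts. Each $Q_n$ is Gaussian, say $Q_n\sim\mathrm N(\mu_n,\sigma_n^2)$, and convergence in distribution to $\mathrm N(0,\sigma^2)$ forces $\mu_n\to 0$ and $\sigma_n^2\to\sigma^2$: comparing characteristic functions $\exp(i\mu_n t-\tfrac12\sigma_n^2 t^2)\to\exp(-\tfrac12\sigma^2 t^2)$ pins down both parameters (the limit being non-degenerate since $\sigma>0$). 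In particular there is a constant $C$ with $|\mu_n|\le C$ and $\sigma_n\le C$ for every $n$.

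Next, I would fix $\eps>0$ and set $t_n:=\eps(\log n)^{1/2+\delta}$. Writing $Q_n=\mu_n+\sigma_n Z$ with $Z\sim\mathrm N(0,1)$, the boundedness above lets me estimate
\[
\P(|Q_n|>t_n)\le \P\!\left(|Z|>\frac{t_n-|\mu_n|}{\sigma_n}\right)\le 2\,\P(Z>s_n),\qquad s_n:=\frac{t_n-C}{C},
\]
which is valid once $t_n>C$. Since $s_n\to\infty$, the stated Gaussian tail bound yields $\P(Z>s_n)\le (s_n\sqrt{2\pi})^{-1}e^{-s_n^2/2}$.

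The decisive step is the exponent bookkeeping. Because $s_n^2$ is of order $(\log n)^{1+2\delta}$ and $\delta>0$, one has $s_n^2/2\ge 2\log n$ for all large $n$, hence $e^{-s_n^2/2}\le n^{-2}$ and therefore $\sum_n\P(|Q_n|>t_n)<\infty$. By Borel--Cantelli, almost surely $|Q_n|\le t_n$ for all sufficiently large $n$, so $\limsup_n |Q_n|/(\log n)^{1/2+\delta}\le\eps$ almost surely; letting $\eps$ run through a sequence tending to $0$ gives the claimed limit.

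I expect the only genuine subtlety to be this exponent computation: it is exactly the strict positivity of $\delta$ that converts the borderline polynomial decay $e^{-s_n^2/2}\approx n^{-(\eps/\sigma)^2/2}$ (which is what $\delta=0$ would give, and which is summable only for large $\eps$) into super-polynomial decay summable for every $\eps$. Everything else — the boundedness of $\mu_n$ and $\sigma_n$, and the absence of any independence requirement in the first Borel--Cantelli lemma — is routine.
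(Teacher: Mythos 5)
Your proof is correct and follows exactly the route the paper indicates (the paper only sketches it, remarking that the lemma ``follows quickly from the Borel--Cantelli Lemma and the tail inequality''): you reduce to summability of $\P(|Q_n|>\eps(\log n)^{1/2+\delta})$ via the Gaussian tail bound, having first extracted uniform bounds on $\mu_n$ and $\sigma_n$ from the convergence in distribution. Your filled-in details, including the handling of the parameters and the final intersection over a sequence $\eps\downarrow 0$, are all sound.
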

		Our proof of the strong consistency of SLOPE is based on the strong consistency of~the~OLS estimator. The latter result  is a folklore and we prove it in our~setting. %known under stronger assumptions on the errors (see e.g. \cite{AT}).
		%We prove it here in our setting.
		\begin{proposition}
			Consider the linear regression model \eqref{reg2}.\\
			If $\lim\limits_n n^{-1} (\bX^{(n)})'\bX^{(n)} =\bC$, where $\bC$ is positive definite, then $\widehat{\bbeta}^{OLS}_{n}\stackrel{a.s.}{\longrightarrow} \bbeta$.
		\end{proposition}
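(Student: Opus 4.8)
The plan is to reduce the statement to a coordinatewise application of Lemma~\ref{lem:gauss}. Write $\bA_n := \lt((\bX^{(n)})'\bX^{(n)}\rt)^{-1}(\bX^{(n)})'$ for the least squares operator, which is well defined since $\mathrm{rank}(\bX^{(n)})=p$. Substituting \eqref{reg2} into the OLS formula $\widehat{\bbeta}^{OLS}_n=\bA_n\bY^{(n)}$ gives the exact error identity
$$
\widehat{\bbeta}^{OLS}_n-\bbeta = \bA_n\,\beps^{(n)}.
$$
Because $\beps^{(n)}\sim N(0,\sigma^2\bI_n)$, the vector $\widehat{\bbeta}^{OLS}_n-\bbeta$ is centered Gaussian with covariance $\sigma^2\bA_n\bA_n' = \sigma^2\lt((\bX^{(n)})'\bX^{(n)}\rt)^{-1}$. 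In particular each coordinate $(\widehat{\bbeta}^{OLS}_n-\bbeta)_j$ is a centered Gaussian variable, and all of these live on the common probability space carrying the sequence $(\beps^{(n)})_n$.

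First I would rescale by $\sqrt n$ to stabilize the variance. Using the hypothesis $n^{-1}(\bX^{(n)})'\bX^{(n)}\to\bC$ with $\bC>0$, and hence $\lt(n^{-1}(\bX^{(n)})'\bX^{(n)}\rt)^{-1}\to\bC^{-1}$ by continuity of matrix inversion on positive definite matrices, the variance of the $j$-th coordinate of $Q_n:=\sqrt n\,(\widehat{\bbeta}^{OLS}_n-\bbeta)$ equals
$$
\sigma^2\lt[\,n\lt((\bX^{(n)})'\bX^{(n)}\rt)^{-1}\,\rt]_{jj} = \sigma^2\lt[\lt(n^{-1}(\bX^{(n)})'\bX^{(n)}\rt)^{-1}\rt]_{jj}\longrightarrow \sigma^2\lt[\bC^{-1}\rt]_{jj}>0,
$$
the limit being strictly positive because $\bC^{-1}$ is positive definite. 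Thus, for each fixed $j$, the sequence $\lt((Q_n)_j\rt)_n$ is a sequence of centered Gaussian variables converging in distribution to $N\lt(0,\sigma^2[\bC^{-1}]_{jj}\rt)$, with nondegenerate limit.

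Then I would apply Lemma~\ref{lem:gauss} to $\lt((Q_n)_j\rt)_n$, which yields, for any $\delta>0$,
$$
\frac{\sqrt n\,(\widehat{\bbeta}^{OLS}_n-\bbeta)_j}{(\log n)^{1/2+\delta}}\stackrel{a.s.}{\longrightarrow}0,
\qquad\text{equivalently}\qquad
(\widehat{\bbeta}^{OLS}_n-\bbeta)_j = o\lt(\frac{(\log n)^{1/2+\delta}}{\sqrt n}\rt)\quad\text{a.s.}
$$
Since $(\log n)^{1/2+\delta}/\sqrt n\to 0$, each coordinate of $\widehat{\bbeta}^{OLS}_n-\bbeta$ tends to $0$ almost surely; intersecting the $p$ full-probability events (a finite intersection, hence still of full probability) gives $\widehat{\bbeta}^{OLS}_n\stackrel{a.s.}{\longrightarrow}\bbeta$.

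The only genuinely delicate point is the \emph{almost sure} (rather than in probability) conclusion. Since no relation is assumed between $\beps^{(n)}$ and $\beps^{(m)}$ for $n\neq m$, no strong law or martingale argument is available, and the coordinates of $\widehat{\bbeta}^{OLS}_n-\bbeta$ have variances of order $1/n$ that by themselves only force convergence in probability. The device circumventing this is exactly the $\sqrt n$-rescaling followed by Lemma~\ref{lem:gauss}: the Gaussian tail bound together with Borel–Cantelli controls the almost sure growth of the variance-stabilized sequence at the sub-logarithmic rate $(\log n)^{1/2+\delta}$, and the extra factor $1/\sqrt n$ then drives genuine almost sure convergence to $\bbeta$.
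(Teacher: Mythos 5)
Your proof is correct and follows essentially the same route as the paper: both write $\widehat{\bbeta}^{OLS}_n-\bbeta=((\bX^{(n)})'\bX^{(n)})^{-1}(\bX^{(n)})'\beps^{(n)}$, observe that $\sqrt{n}\,(\widehat{\bbeta}^{OLS}_n-\bbeta)$ is Gaussian with covariance converging to $\sigma^2\bC^{-1}$, and apply Lemma~\ref{lem:gauss} coordinatewise together with $(\log n)^{1/2+\delta}=o(\sqrt n)$. Your additional remarks on positivity of the limiting variances and on why the $\sqrt n$-rescaling is the key to upgrading from convergence in probability to almost sure convergence simply make explicit what the paper leaves implicit.
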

		\begin{proof}
			We have 
			\begin{align*}
				\widehat{\bbeta}^{OLS}_n -\bbeta= ((\bX^{(n)})'\bX^{(n)})^{-1} (\bX^{(n)})' \bY^{(n)} -\bbeta =((\bX^{(n)})' \bX^{(n)})^{-1} (\bX^{(n)})' \beps^{(n)}.
			\end{align*}
			Then $\sqrt{n}\left(\widehat{\bbeta}^{OLS}_n  - \bbeta\right)$ has the normal distribution $N(0, \sigma^2 (n^{-1}(\bX^{(n)})' \bX^{(n)})^{-1})$ and its components satisfy the assumptions of Lemma \ref{lem:gauss}. Since $\log(n)^{1/2+\delta}=o(\sqrt{n})$, we get the assertion by Lemma \ref{lem:gauss}.
		\end{proof}

		\begin{proof}[Proof of Theorem {\ref{strong consistency of slope}}] (a) It follows from Theorem \ref{lemma 2 bis} that there exists a vector $\bpi^*_n \in C_{({\mathbf \Lambda}^{(n)})}$ such that  
			\begin{equation*}
				\widehat{\bbeta}^{SLOPE}_{n}=((\bX^{(n)})' \bX^{(n)})^{-1}((\bX^{(n)})' \bY^{(n)} -\bpi^*_n).
			\end{equation*}
			
			Since  $\bpi^*_n$ takes values in $ \bC_{{\mathbf \Lambda}^{(n)}}$, it follows that  $\|\bpi^*_n \|_{\infty} \leq \lambda_{1}^{(n)}$. Hence,
			\begin{equation} \label{eq: convergence of pi}
				\frac{\bpi^*_n}{n} \stackrel{a.s.}{\longrightarrow} \bnull,
			\end{equation}
			because $ \displaystyle 
			\left \| \frac{\bpi^*_n}{n}  \right \|_{\infty} \leq \frac{\lambda_{1}^{(n)}}{n}  \rightarrow 0$. 
			The assumption that $\mbox{rank}(\bX^{(n)})=p$ implies that the matrix  $(\bX^{(n)})'\bX^{(n)}$ is invertible  and hence  the least squares estimator of $\bbeta$ is~unique and has the~form $\displaystyle 
			\widehat{\bbeta}^{OLS}_{n}=((\bX^{(n)})' \bX^{(n)})^{-1}(\bX^{(n)})'\bY^{(n)}$.
			Combining  with (\eqref{eq: convergence of pi}) the fact that $\widehat{\bbeta}^{OLS}_{n}\stackrel{a.s.}{\longrightarrow} \bbeta$, we~conclude that
			\begin{eqnarray*}
				\widehat{\bbeta}^{SLOPE}_{n} &=& ((\bX^{(n)})' \bX^{(n)})^{-1}((\bX^{(n)})' \bY^{(n)} -\bpi^*_n) = \widehat{\bbeta}^{OLS}_{n} - ((\bX^{(n)})' \bX^{(n)})^{-1}\bpi^*_n \\
				&=&  \widehat{\bbeta}^{OLS}_{n} - \left(\frac{(\bX^{(n)})' \bX^{(n)}}{n} \right)^{-1} \frac{\bpi^*_n}{n} \stackrel{a.s.}{\longrightarrow} \bbeta-\bC^{-1}\bnull= \bbeta.
			\end{eqnarray*}
			%\end{proof}
			\\
			%\begin{proof}[Proof of Theorem~\ref{lem:no_con}]
			(b) Since $\widehat{\bbeta}^{SLOPE}_{n}$ minimizes  over $\bb \in \mathbb{R}^p$ the function  
			$$
			l(\bb):=\frac{1}{2} \|\bY^{(n)}-\bX^{(n)}\bb \|_2^2+ J_{\mathbf \Lambda^{(n)}}(\bb)
			$$
			and since $\lambda_{1}^{(n)} \|\bb \|_{\infty} \leq J_{\mathbf \Lambda^{(n)}}(\bb)$, it follows that
			\begin{eqnarray*} 
				0 & \le & l(0)-l(\widehat{\bbeta}^{SLOPE}_{n})
				= (\widehat{\bbeta}^{SLOPE}_{n})'(\bX^{(n)})'\bY^{(n)}\\
				& - & \frac{1}{2} (\widehat{\bbeta}^{SLOPE}_{n})'(\bX^{(n)})'\bX^{(n)} \widehat{\bbeta}^{SLOPE}_{n}-J_{\mathbf \Lambda^{(n)}}(\widehat{\bbeta}^{SLOPE}_{n}) \\
				& \leq & (\widehat{\bbeta}^{SLOPE}_{n})'(\bX^{(n)})'\bY^{(n)}-\frac{1}{2} (\widehat{\bbeta}^{SLOPE}_{n})'(\bX^{(n)})'\bX^{(n)} \widehat{\bbeta}^{SLOPE}_{n}\\
				& - & \lambda_{1}^{(n)} \|\widehat{\bbeta}^{SLOPE}_{n} \|_{\infty} = (\widehat{\bbeta}^{SLOPE}_{n})'(\bX^{(n)})'\bX^{(n)} \widehat{\bbeta}^{OLS}_{n}\\
				& - & \frac{1}{2} (\widehat{\bbeta}^{SLOPE}_{n})'(\bX^{(n)})'\bX^{(n)} \widehat{\bbeta}^{SLOPE}_{n}-\lambda_{1}^{(n)} \|\widehat{\bbeta}^{SLOPE}_{n} \|_{\infty}.
			\end{eqnarray*}
			The last equality follows from the fact that $(\widehat{\bbeta}^{SLOPE}_{n})'(\bX^{(n)})'(\bY^{(n)} - \bX^{(n)})\widehat{\bbeta}^{OLS}_{n} = 0$.\\
			Suppose to the contrary that $\widehat{\bbeta}^{SLOPE}_{n}\stackrel{a.s.}{\longrightarrow} \bbeta$. Then, using the facts that\\ $\widehat{\bbeta}^{OLS}_{n} \stackrel{a.s.}{\longrightarrow} \bbeta$ and~that $\lim\limits_n n^{-1} (\bX^{(n)})'\bX^{(n)} =\bC$, we have
			$$
			0 \leq \frac{l(0)-l(\widehat{\bbeta}^{SLOPE}_{n})}{n} \stackrel{a.s.}{\longrightarrow} \bbeta'\bC\bbeta-\frac{1}{2} \bbeta'\bC\bbeta-\lambda_0 \|\bbeta \|_{\infty} = \frac{1}{2} \bbeta'\bC\bbeta-\lambda_0 \|\bbeta \|_{\infty}.
			$$
			For $\lambda_0>0$ this provides a contradiction since   the inequality $\lambda_0 \|\bbeta \|_{\infty} \leq \frac{1}{2} \bbeta'\bC\bbeta$ does~not~hold when the value of $\bbeta$ is sufficiently close to $0$.
		\end{proof}
		
		\begin{remark}
			The proof of Theorem~\ref{strong consistency of slope} (b) does not exclude that 
			$\bbeta^{SLOPE}_n~\to~\bbeta$ for $\|\bbeta\|$ large enough.
			
			However, the definition of strong consistency requires the convergence for any value of the parameter $\bbeta$. We prove that if true parameter $\bbeta$ satisfies $\lambda_0 \|\bbeta\|_\infty > \bbeta' \bC\bbeta/2$ and $\lim\limits_n \lambda_1 /n = \lambda_0 > 0$,
			then \mbox{\boldmath{$\betaSLOPE$}} is not convergent for $\bbeta$.
		\end{remark}
		
		\subsection{Asymptotic pattern recovery in the orthogonal design}\label{sec:Asymptotics in the orthogonal case}
		
		We again consider a sequence of linear models (\eqref{reg2}) but this time we assume that for~each~$n$ the deterministic design matrix $\bX^{(n)}$  of size $n\times p$  satisfies
		\begin{align}\label{cond1}
			(\bX^{(n)})' \bX^{(n)} = n \bI_p.
		\end{align}
		As usual, we assume Gaussian errors, $\beps^{(n)}\sim\mathrm{N}(0,\sigma^2 \bI_n)$. 
		
		\noindent
		%We consider SLOPE estimators of $\bbeta$, where the corresponding tuning parameters are allowed to %vary with $n$, let $\blambda_n=(\lambda_1^{(n)},\ldots,\lambda_p^{(n)})$. 
		Let $\widehat{\bbeta}_n^{SLOPE}=\left(\widehat{\bbeta}^{\slope}_1(n),\ldots,\widehat{\bbeta}^{\slope}_p(n)\right)'$ be the SLOPE estimator defined by \eqref{slope for consistency}. With the above notation we present the main result of this section.
		\begin{theorem}\label{thm:ON}
			Assume that 
			\begin{align*}
				\lim_{n\to\infty}\frac{\lambda_1^{(n)}}{n}=0
			\end{align*}
			and that there exists $\delta>0$ such that
			\begin{align}\label{ass2}
				\underset{{n}\to\infty}{\lim\inf} \frac{ \lambda_i^{(n)}- \lambda_{i+1}^{(n)}}{\sqrt{n}\,(\log(n))^{1/2+\delta}}= m>0
				%\infty
				\quad\mbox{for}\quad i=1,\ldots,p-1.
			\end{align}
			Then we have
			\[
			\pattern(\hat\bbeta^{\slope}_n)\overset{a.s.}{\to}\pattern(\bbeta).
			\]
		\end{theorem}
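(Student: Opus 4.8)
The plan is to transfer the problem to the ordinary least squares estimator, for which the exact Gaussian fluctuations are known, and then to invoke the finite-sample characterizations of clustering and support already proved in the orthogonal design. By Lemma~\ref{lemat 1} and Corollary~\ref{cor:SLOPEvsOLS}, in the design~\eqref{cond1} the estimator $\widehat{\bbeta}_n^{\slope}$ is the proximal image of $\widehat{\bbeta}_n^{\ols}=\tfrac1n(\bX^{(n)})'\bY^{(n)}$ under $J_{\bLambda^{(n)}/n}$, and $\widehat{\bbeta}^{\ols}_n-\widehat{\bbeta}^{\slope}_n=\tfrac1n\bpi^*_n$ with $\|\bpi^*_n\|_\infty\le\lambda_1^{(n)}$. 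Since $\widehat{\bbeta}^{\ols}_n-\bbeta=\tfrac1n(\bX^{(n)})'\beps^{(n)}\sim\mathrm N(0,\tfrac{\sigma^2}{n}\bI_p)$, each $\sqrt n(\widehat{\bbeta}^{\ols}_j(n)-\beta_j)$ is exactly $\mathrm N(0,\sigma^2)$, so Lemma~\ref{lem:gauss} yields both $\widehat{\bbeta}^{\ols}_n\to\bbeta$ a.s.\ and the rate
\[
\max_{1\le j\le p}\lt|\widehat{\bbeta}^{\ols}_j(n)-\beta_j\rt|=o\!\lt(\tfrac{(\log n)^{1/2+\delta}}{\sqrt n}\rt)\quad\text{a.s.}
\]
Assumption~\eqref{ass2} guarantees that, eventually and a.s., $\tfrac{\lambda_i^{(n)}-\lambda_{i+1}^{(n)}}{n}\ge \tfrac{m}{2}\,\tfrac{(\log n)^{1/2+\delta}}{\sqrt n}$, so the gap scale of the penalty strictly dominates the OLS fluctuation, while $\lambda_1^{(n)}/n\to0$ makes the overall perturbation $\tfrac1n\bpi^*_n$ negligible.

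I would then show that the whole $\pattern$ is recovered block by block. Let $v_1>\cdots>v_k>0$ be the distinct nonzero values of $|\bbeta|$. Because $\widehat{\bbeta}^{\ols}_n\to\bbeta$, for all large $n$ the decreasing rearrangement of $|\widehat{\bbeta}^{\ols}_n|$ separates the coordinates into the blocks defined by $v_1,\dots,v_k$ and the null block, since the smallest magnitude in one block eventually exceeds the largest in the next; on the support the OLS signs also equal $\sign(\bbeta)$. If two coordinates belong to the same nonzero block and are consecutive in the sorted order, then the difference of their OLS magnitudes is $o((\log n)^{1/2+\delta}/\sqrt n)$ and hence $\le\tfrac{\lambda_i^{(n)}-\lambda_{i+1}^{(n)}}{n}$, so Theorem~\ref{FAKT 2} equalizes the corresponding SLOPE magnitudes; chaining across a block forces the entire $\bbeta$-cluster into a single SLOPE cluster.

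For support recovery I would verify the three conditions of Theorem~\ref{supportON}. Condition (a) and the lower bounds (b) hold eventually because their left-hand sides tend to strictly positive constants while $\tfrac1n\sum\lambda_i^{(n)}\le\tfrac{p\,\lambda_1^{(n)}}{n}\to0$. Condition (c) is where the gap assumption is used: each null OLS magnitude is $o((\log n)^{1/2+\delta}/\sqrt n)$, whereas the partial sums $\tfrac1n\sum_{i=p_0+1}^{k}\lambda_i^{(n)}$ are, by~\eqref{ass2}, of strictly larger order, so the null coordinates are thresholded to zero. Together with Lemma~\ref{lemma 2 bis}(a), which forces $\sign(\widehat{\bbeta}^{\slope}_j)\in\{0,\sign(\widehat{\bbeta}^{\ols}_j)\}$, this gives the correct signs and the correct support.

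It remains to keep distinct nonzero clusters apart, the one direction not supplied by Theorem~\ref{FAKT 2}. For a coordinate $j$ at the bottom of block $\ell$ and $j'$ at the top of block $\ell+1$, the reverse triangle inequality together with $\|\bpi^*_n\|_\infty\le\lambda_1^{(n)}$ gives
\[
\lt|\widehat{\bbeta}^{\slope}_j\rt|-\lt|\widehat{\bbeta}^{\slope}_{j'}\rt|\ge \lt|\widehat{\bbeta}^{\ols}_j\rt|-\lt|\widehat{\bbeta}^{\ols}_{j'}\rt|-\frac{2\lambda_1^{(n)}}{n}\longrightarrow v_\ell-v_{\ell+1}>0,
\]
so the blocks keep strictly separated magnitudes and the hierarchy is preserved. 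Intersecting the finitely many (there are at most $p$ of each kind, and $p$ is fixed) eventual events, a.s.\ there is a random $N$ past which $\widehat{\bbeta}^{\slope}_n$ has the same signs, the same cluster membership and the same ordering as $\bbeta$; as $\pattern$ takes values in $\Z^p$, this is exactly $\pattern(\widehat{\bbeta}^{\slope}_n)=\pattern(\bbeta)$ for $n\ge N$, giving the claimed almost sure convergence. I expect the delicate point to be the simultaneous calibration of scales: the fluctuation $o((\log n)^{1/2+\delta}/\sqrt n)$ must sit strictly below the gap scale $\tfrac{\lambda_i-\lambda_{i+1}}{n}$ to both merge true clusters and threshold the nulls, yet $\lambda_1/n$ must vanish so as not to split true clusters — it is precisely this balance that~\eqref{ass2} encodes, and the cluster-separation step relies on Corollary~\ref{cor:SLOPEvsOLS} rather than on Theorem~\ref{FAKT 2}.
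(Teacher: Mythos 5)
Your proposal is correct and follows essentially the same route as the paper: reduce to the proximal/OLS formulation via Lemma~\ref{lemat 1} and Corollary~\ref{cor:SLOPEvsOLS}, use Lemma~\ref{lem:gauss} to get the a.s.\ rate $o((\log n)^{1/2+\delta}/\sqrt n)$ for the OLS fluctuations, invoke Theorem~\ref{FAKT 2} to merge coordinates within a true cluster and Theorem~\ref{supportON} to recover the support, and use the bound $\|\bpi^*_n\|_\infty\le\lambda_1^{(n)}$ with $\lambda_1^{(n)}/n\to 0$ (equivalently, the strong consistency of Theorem~\ref{strong consistency of slope}(a), which the paper cites for its parts (b) and (d)) to keep distinct clusters and signs separated. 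The only differences are cosmetic: the paper bounds consecutive sorted OLS gaps by summing over the cluster rather than by the max-norm rate, and delegates the separation step to the consistency theorem instead of rederiving it.
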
 \noindent
		Note that above conditions are satisfied e.g. by $\lambda_{i}^{(n)}=(p+1-i) n^{2/3}$.
		% Note that the conditions of the Theorem~\ref{thm:ON} are satisfied e.g. by the sequence $\lambda_{i}^{(n)}=(p+1-i) n^{2/3}$.
		%The proof is given in the Appendix.
		\begin{proof}
			Without loss of generality we may assume that $\bbeta=(\beta_1,\ldots,\beta_p)'$ and $\beta_1\geq \beta_2\geq\ldots\geq\beta_p\geq0$.	Indeed, we can always achieve such condition by permuting the columns of~$\bX^{(n)}$ and~changing their signs.\\
			Since the space of models is discrete, we have to show that for large $n$,\\ $\pattern(\hat\bbeta^{\slope}_n)=\pattern(\bbeta)$ a.s. We divide the proof into the following four parts:
			\begin{enumerate}
				\item[(a)] $\beta_i=\beta_{j}>0$ $\implies$ $\widehat{\beta}^{\slope}_i(n) =\widehat{\beta}^{\slope}_{j}(n)$ a.s. for large $n$, 
				\item[(b)] $\beta_i>\beta_{i+1}$ $\implies$ $\widehat{\beta}^{\slope}_i(n) >\widehat{\beta}^{\slope}_{i+1}(n)$ a.s. for large $n$, 
				\item[(c)] $\beta_i=0$ $\implies$ $\widehat{\beta}^{\slope}_i(n) =0$ a.s.  for large $n$,
				\item[(d)]  $\beta_i>0$ $\implies$ $\widehat{\beta}^{\slope}_i(n) >0$ a.s. for large $n$.
			\end{enumerate} 
			The points (b) and (d) follow quickly by the strong consistency of $\hat\bbeta^{\slope}(n)$.
			To~prove (a) and (c) we reduce the problem to the orthogonal design case. We have
			\begin{align*}
				&\operatorname*{arg\,min}_{{\mathbf b} \in \mathbb{R}^p}& [ \,\frac{1}{2} \|\bY^{(n)}-\bX^{(n)}\bb \|_2^2+ J_{\mathbf \Lambda^{(n)}}(\bb)\,]\\
				= &\operatorname*{arg\,min}_{{\mathbf b} \in \mathbb{R}^p}&   [ \,\frac{1}{2} \|(\widetilde{\bY}^{(n)})'-(\widetilde{\bX}^{(n)})'\bb \|_2^2+ J_{{\widetilde{\mathbf{\Lambda}}}^{(n)}}(\bb)\,  ],
			\end{align*}
			where $\widetilde{\bY}^{(n)} = \bY_n/\sqrt{n}$, $\widetilde{\bX}^{(n)}=\bX^{(n)}/\sqrt{n}$ and $\widetilde{\bLambda}^{(n)} = \bLambda^{(n)}/n$. Clearly, \eqref{cond1} implies that $(\widetilde{\bX}^{(n)})' \widetilde{\bX}^{(n)}=\bI_p$, which allows to use results from the orthogonal design. However, we~note that the OLS estimators $\widehat{\bbeta}_n^{\ols}=(\widehat{\bbeta}^{\ols}_{1}(n),\ldots,\widehat{\bbeta}^{\ols}_{p}(n))$ are the same in the original model  and its scaled version $\widetilde{\bY}^{(n)}=\widetilde{\bX}^{(n)}\bbeta+\beps^{(n)}/\sqrt{n}$.
			
			Let $\pi_n$ be a permutation of $(1,2,\ldots,p)$ satisfying  
			$$|\widehat{\beta}^{\ols}_{\pi_n(1)}(n)|\geq |\widehat{\beta}^{\ols}_{\pi_n(2)}(n)|\geq \ldots\geq |\widehat{\beta}^{\ols}_{\pi_n(p)}(n)|.$$ 
			By the strong consistency of the OLS estimator, taking $n$ sufficiently large, we may ensure that the clusters of $\bbeta$ do not interlace in $\widehat{\bbeta}_n^{\ols}$ in the sense that if $\beta_i>\beta_j$, then $\widehat{\beta}^{\ols}_{i}(n)>\widehat{\beta}^{\ols}_{j}(n)$ a.s. for $n$ sufficiently large. 
			
			Let us now consider point (a). 
			Let $S_i$ denote the cluster containing $\beta_i>0$, that is, the set $S_i=\{j\in \{1,\ldots,p\}\colon \beta_j=\beta_i\}$. 
			In view of the ordering of $\bbeta$, there exists $k_i\in\{1,\ldots,p\}$ such that 
			\[
			S_i = \left\{ \pi_n(j)\colon j\in\{k_i, k_i+1,\ldots,k_i+\#S_i-1 \}\right\}.
			\]
			We will show that if $\pi_n(k), \pi_n(k+1)\in S_i$, then for large $n$
			\begin{align}\label{eq:eq}
				\widehat{\beta}^{\slope}_{\pi_n(k)}(n)=\widehat{\beta}^{\slope}_{\pi_n(k+1)}(n) \quad\mbox{a.s.},
			\end{align}
			thus $\widehat{\beta}^{\slope}_{j}(n)=\widehat{\beta}^{\slope}_{k}(n)$ for $j,k\in S_i$, which finishes the proof of (a).\\
			% which implies that $\widehat{\beta}^{\slope}_{j}(n)=\widehat{\beta}^{\slope}_{k}(n)$ for $j,k\in S_i$ and finishes the proof of (a).\\
			Now assume that $\pi_n(k), \pi_n(k+1)\in S_i$. Then, by  Theorem~\ref{FAKT 2}, the condition \eqref{eq:eq} is satisfied if 
			\begin{align}\label{eq:toShow}
				\lt|\widehat{\beta}^{\ols}_{\pi_n(k)}(n)\rt| - \lt|\widehat{\beta}^{\ols}_{\pi_n(k+1)}(n)\rt| \leq  \widetilde{\Lambda}_k^{(n)}-\widetilde{\Lambda}_{k+1}^{(n)} = \frac{1}{n}\left(\lambda_k^{(n)}-\lambda_{k+1}^{(n)}\right)
			\end{align}
			holds for large $n$ and both $\widehat{\beta}^{\ols}_{\pi_n(k)}(n)$ and $\widehat{\beta}^{\ols}_{\pi_n(k)}(n)$ have the same sign. The latter is ensured by the strong consistency of the OLS estimator and the fact that $\beta_i>0$. 
			
			If $\pi_n(k),\pi_{n}(k+1)\in S_i$, then we have the following bound
			\begin{align}\label{eq:ineq}
				\left|\widehat{\beta}^{\ols}_{\pi_n(k)}(n) - \widehat{\beta}^{\ols}_{\pi_n(k+1)}(n)\right| \leq \sum_{j\in S_i} \left|\widehat{\beta}^{\ols}_{j}(n) - \widehat{\beta}^{\ols}_{i}(n)\right|.
			\end{align}
			Take any $j\in S_i$. Since both $\widehat{\beta}^{\ols}_{j}(n)$ and $\widehat{\beta}^{\ols}_{i}(n)$ have the normal distribution with the~same mean, by Lemma \ref{lem:gauss}, we have
			\[
			\lim_{n\to\infty}\frac{\sqrt{n}\left(\widehat{\beta}^{\ols}_{j}(n) - \widehat{\beta}^{\ols}_{i}(n)\right)} {(\log(n))^{1/2+\delta}}= 0 \quad\mbox{a.s.}
			\]
			In view of \eqref{eq:ineq} and \eqref{ass2}, this implies that \eqref{eq:toShow} holds true for large $n$. Hence, (a)~follows.\\
			It remains to establish (c). Assume that $\beta_{p_0}>0=\beta_{p_0+1}=\ldots=\beta_p$. Clearly, condition (a) from Theorem \ref{supportON} is satisfied thanks to the strong consistency of the OLS estimator.  For (b),
			we have for $k=1,2,\ldots,p_0$,
			\[
			\sum_{i=k}^{p_0}\widetilde{\Lambda}_i^{(n)} = \frac{1}{n} \sum_{i=k}^{p_0}\lambda_i^{(n)}\leq p_0 \frac{\lambda_1^{(n)}}{n},
			\]
			which converges to $0$. On the other hand, the left-hand side of (b) converges a.s. to $\sum_{i=k}^{p_0} \beta_i$, which is positive. Thus, condition (b) from Theorem \ref{supportON} holds for large~$n$. Condition (c) from Theorem \ref{supportON} follows from Lemma \ref{lem:gauss}. Indeed, we have for $\delta>0$ and $k=p_0+1,\ldots,p$,
			\[
			\lim_{n\to\infty} \frac{\sqrt{n}}{(\log(n))^{1/2+\delta}} \sum_{i=p_0+1}^k |\widehat{\beta}^{\ols}_{\pi_n(i)}(n)| = \sum_{i=p_0+1}^k  \lim_{n\to\infty} \frac{|\sqrt{n}\,\widehat{\beta}^{\ols}_{\pi_n(i)}(n)| }{(\log(n))^{1/2+\delta}} =0 \mbox{ a.s.},
			\]
			while 
			\[
			\lim_{n\to\infty} \frac{\sqrt{n}}{(\log(n))^{1/2+\delta}}   \sum_{i=p_0+1}^k \widetilde{\Lambda}_i^{(n)} \geq  \sum_{i=p_0+1}^k  \lim_{n\to\infty} \frac{\lambda_i^{(n)}-\lambda_{i+1}^{(n)}}{\sqrt{n}(\log(n))^{1/2+\delta}}  = m >0%\infty.
			\]
			Thus, all assumptions of Theorem \ref{supportON} are verified and the proof is complete.
		\end{proof}

		\section{Applications and simulations}\label{sec:appsim}
		%%%%%%%%%%%%%%%%%%%%%%%%%%%%
		Below we present an application of SLOPE in signal denoising. In our example ${\bX\in\R^{300\times 100}}$ is an orthogonal system of trigonometric functions, i.e.\\ $X_{i,(2*j-1)}=\sin(2\pi ij/n)$ and~$X_{i,(2*j)}=\cos(2\pi ij/n)$ for $i=1,\ldots,100$\\ and~${j=1,\ldots,150}$. Here $\bbeta\in\R^p$ is a vector consisting~of two~clusters: $20$ coordinates with absolute value $100$ and $20$ coordinates with absolute value $80$. The absolute values of coordinates of $\bbeta$ are sorted in a decreasing way. The signs of the nonzero coordinates are chosen independently with random uniform distribution. To avoid large bias caused by the shrinkage nature of LASSO and SLOPE, we debias them by combining with the OLS method. For that reason we use the following definition of the pattern matrix $\bU_M$ and the~clustered design matrix $\widetilde{\bX}_M$, which is based on the SLOPE pattern:
		\begin{definition}\label{SLOPE pattern matrix}
			Let $M\neq 0$ be a  pattern  in $\mathcal{M}_p$ with  $k=\|M\|_\infty$ nonzero clusters.
			The pattern matrix $\bU_M\in \R^{p\times k}$ is defined as follows
			\[
			(\bU_M)_{ij}=\sign(m_i){\bf 1}_{(|m_i|=k+1-j)},\qquad i \in \{1,\dots,p\}, \,j \in \{1,\dots,k\}.
			\]
			%If $M=0$, we set $\bU_M=0\in \R^p$.
		\end{definition}
		% \begin{example}
			% If $M=(-2,1,0,-1,2)'$, then
			% \[
			% \bU_M = \begin{pmatrix}
				% -1 & 0 & 0 & 0 & 1 \\ 0 & 1 & 0 & -1 & 0
				% \end{pmatrix}'.
			% \]
			% \end{example}
		\begin{definition}\label{def:3}
			Let $M\neq 0$ be a pattern in $\R^p$ and $k=\max\{\|M\|_\infty,1\}$. For $\bX\in\R^{n\times p}$ we define the clustered design matrix by $\tilde \bX_M=\bX \bU_M\in\R^{n\times k}$.
			%For $\bLambda\in\R^{p+}$, the clustered parameter $\bLambda$ is defined by $\tilde{\bLambda}_M= \bU_{|M|_{\downarrow}} '\bLambda\in\R^k$. 
		\end{definition}
		
		To perform the debiased SLOPE, we begin with recovering the support and clusters of a~true vector $\bbeta$ with SLOPE. Then, using the obtained SLOPE pattern $M$, we replace the design matrix with its clustered version $\widetilde{\bX}_M = \bX\mathbf{U}_M$. %Here $\mathbf{U}_M$ is a pattern matrix defined in~\cite{modrec}:
		Then we perform the Ordinary Least Squares regression for the model $\bY = \widetilde{\bX}_M \mathbf{b} + \beps$, where $\mathbf{b}$ consists only of distinct absolute values of \mbox{\boldmath{$\betaSLOPE$}}.
		
		% If $M=\model(\beta)$ for $\beta\in\R^p$ satisfies  $\|M\|_\infty<p$, the pattern $M=(m_1,\ldots,m_p)'$
		% leads naturally to reduce the dimension of the design matrix $\bX$ in the regression problem, by replacing $X$ by
		% $\tilde{\bX}_M$. Actually, if $\model(\beta)=M$, then $\bX \beta =\bX \bU_M\kappa = \tilde{\bX}_M \kappa$ for $\kappa\in \R^{k+}$. In particular,
		% in order to construct the clustered matrix $\tilde{\bX}_M$, by Definitions  \ref{SLOPE pattern matrix} and \ref{def:3},
		% \begin{enumerate}
			% 	\item null components $m_i=0$ lead to discard the column $\bX_i$ from the design matrix $\bX$,
			% 	\item a cluster $K\subset \{1,\dots,p\}$ of $M$ (component of $M$ equal in absolute value) leads to  replace the  columns
			% 	$(\bX_i)_{i\in K}$ by one column equal to the signed sum: $\sum\limits_{i\in K}\sign(m_i)\bX_i$. 
			% \end{enumerate}
		
		Analogously we proceed with the debiased LASSO. However, in this method we use the~LASSO pattern matrix defined in a following way:
		
		For LASSO we have the LASSO pattern being a sign vector, cf. \cite{geom2}. For $\bS\in\{-1,0,1\}^p$, $\|\bS\|_1$ denotes the number of nonzero coordinates. If $\|bS\|_1=k\geq1$, then we define the~corresponding pattern matrix $\bU_S\in \R^{p\times k}$ by 
		% \[
		% \U_S= \mathrm{diag}(\sign(S))_{\mathrm{supp}(S)},
		% \]
		% where $\sign(S)=(\sign(S_i)\colon i\in \{1,\ldots,p\})$ and for $A\in \R^{p\times p}$ and $B\subset\{1,\ldots,p\}$, $A_B$ denotes the submatrix of $A$ obtained by keeping columns corresponding to indices in $B$.  
		\[
		\bU_S= \mathrm{diag}(\bS)_{\mathrm{supp}(\bS)},
		\]
		i.e. the submatrix of $\mathrm{diag}(\bS)$ obtained by keeping columns corresponding to indices in~$\mathrm{supp}(\bS)$.  
		%Observe that for any $0\neq\bbeta\in\R^p$ there exist unique $\bS\in\{-1,0,1\}^p$ and $\kappa_0\in\R_+^{k}$ such that $\bbeta=\bU_S\kappa_0$. 
		Then we define the reduced matrix $\tilde{\bX}_S$
		by 
		\[
		\tilde{\bX}_S=\bX \bU_S.
		\]
		Equivalently, we have $\tilde{\bX}_S = (S_i X_i)_{i\in\mathrm{supp}(S)}$. The notion of pattern matrix also appears in~\cite{modrec}.
		%Firstly we~recover the support and clusters of a true vector $\bbeta$ and secondly we apply the~OLS method to the regression model $\bY = \bX\bU\bbeta+\beps$,
		In our example $\beps\in\mathcal{N}(0,\sigma^2 \bI_n)$ and $\sigma=30$. %and the~rows of~$\bU\in\R^{p\times 2}$ are defined as
		% $$
		% \bU_{i,\bullet}=\begin{cases}
			% [\sign(\hat\beta^{SLOPE}_i),0],\quad &1\le i\le 20,\\
			% [0,\sign(\hat\beta^{SLOPE}_i)],\quad &21\le i\le 40,\\
			% [0,0],\quad &else.
			% \end{cases}
		% $$
		
		We compare the Mean Square Error and the signal denoising of the classical OLS estimation, the LASSO with the tuning parameter $\lambda_{cv}$ minimizing the cross-validated error, the denoised version of LASSO with $\lambda=5\lambda_{cv}$ and the denoised version of~SLOPE with the tuning vector $\bLambda$ chosen with respect to the scaled arithmetic sequence ($\lambda_i = 3.5(p+1-i)$). %We perform $1000$ Monte-Carlo repetitions with independent data and present the average value of obtained Mean Square Error.
		\begin{figure*}[ht]%{}{}
			\centering
			\begin{subfigure}{.4\textwidth}
				\centering
				% include third image
				\includegraphics[scale=0.25]{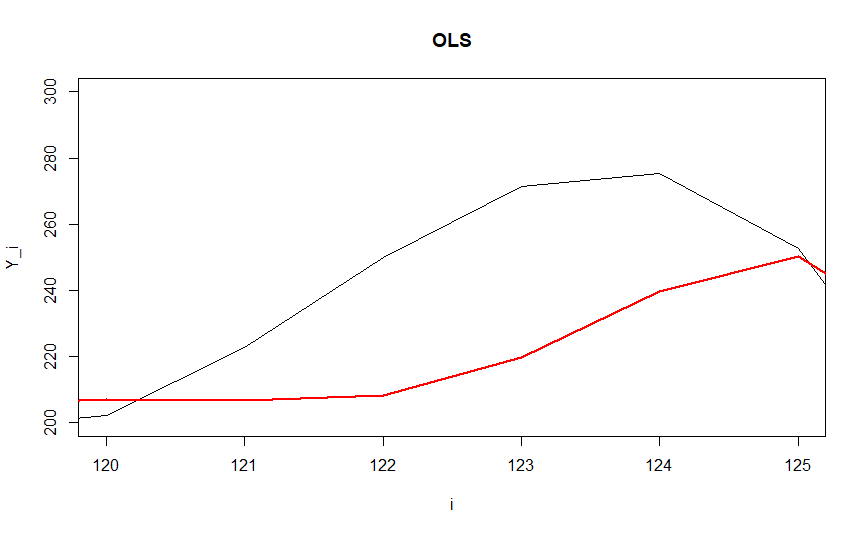}
			\end{subfigure}
			\hfill
			\begin{subfigure}{.4\textwidth}
				\centering
				% include fourth image
				\includegraphics[scale=0.25]{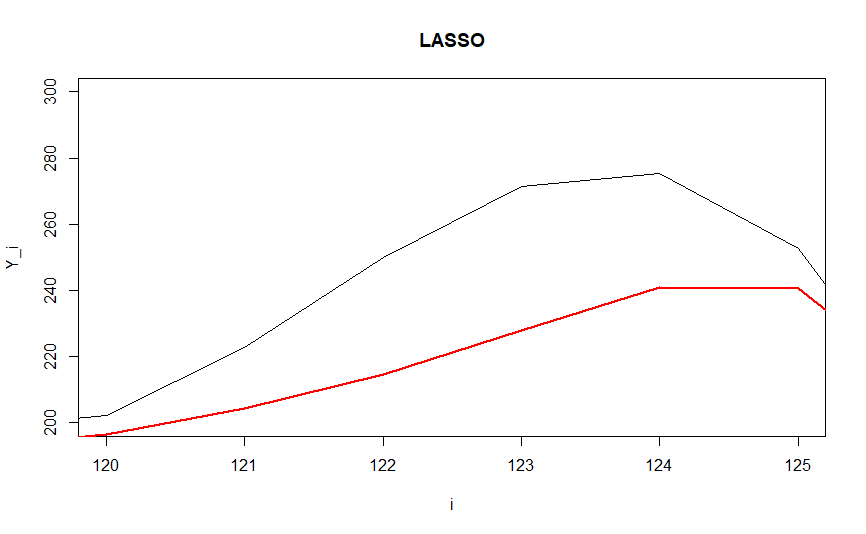}
			\end{subfigure}
		\end{figure*}
		\setcounter{figure}{1}
		\begin{figure}[ht]
			\begin{subfigure}{.4\textwidth}
				\centering
				% include third image
				\includegraphics[scale=0.25]{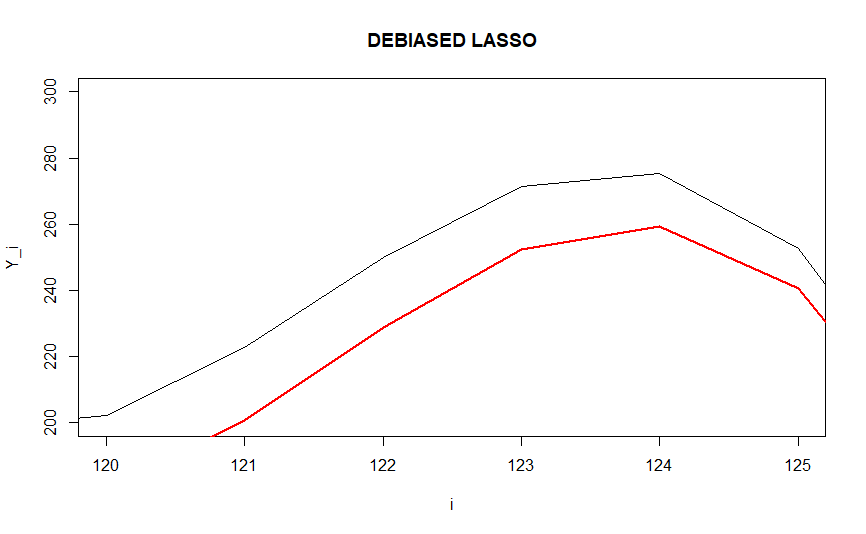}
			\end{subfigure}
			\hfill
			\begin{subfigure}{.4\textwidth}
				\centering
				% include fourth image
				\includegraphics[scale=0.25]{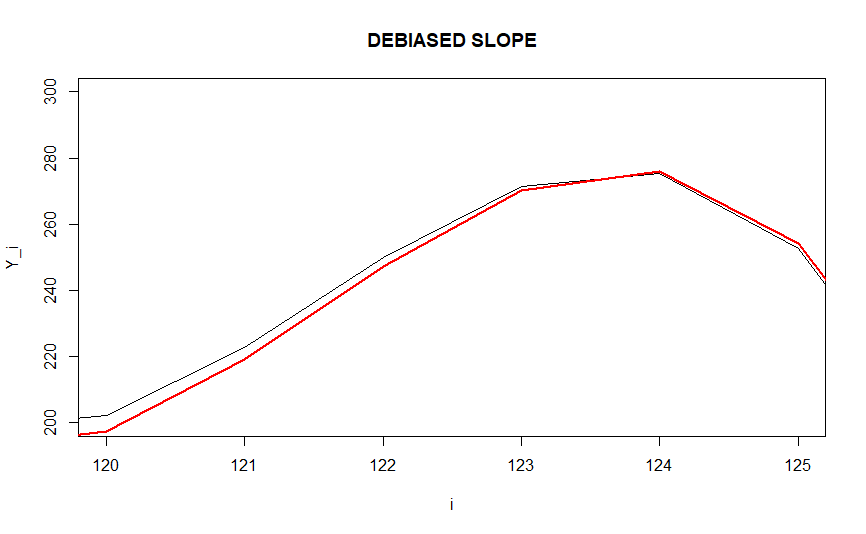}
			\end{subfigure}
			\caption{Comparison of signal denoising by OLS, LASSO, debiased LASSO and debiased SLOPE (respective images from left) on the coordinates $[120,125]$ of the regression model $\bY=\bX\bbeta+\beps$. The black lines correspond to the true values of $\bX\bbeta$. The red lines correspond to the estimators $\bY=\bX\hat\bbeta$.}%{}
		\end{figure}
		
		\begin{figure}
			\centering
			\includegraphics[scale=0.35]{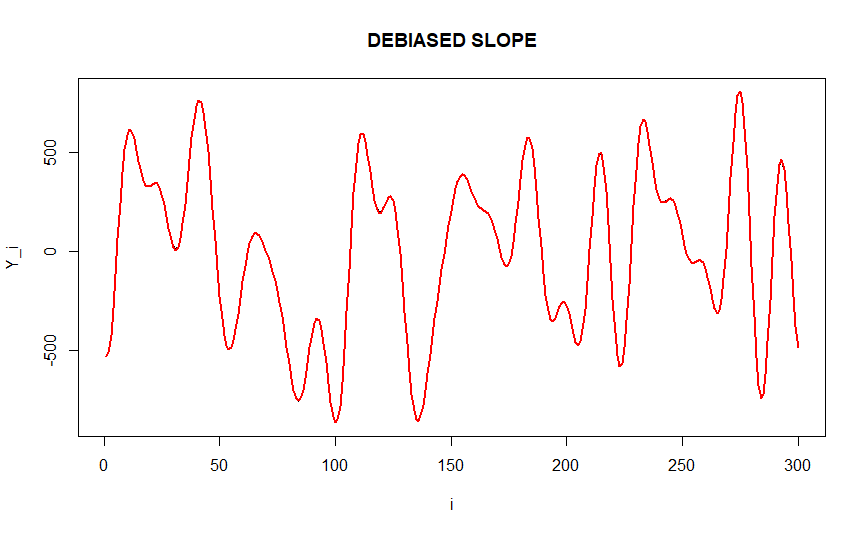}
			\caption{Signal denoising by debiased SLOPE on all coordinates of the regression model $\bY=\bX\bbeta+\beps$. The (almost overlapping) black line and the red line correspond respectively to the true values of $\bX\bbeta$ and to $\bY=\bX\hat\bbeta^{SLOPE}$.}
		\end{figure}
		
		\begin{figure}
			\begin{subfigure}{.4\textwidth}
				\centering
				% include third image
				\includegraphics[scale=0.3]{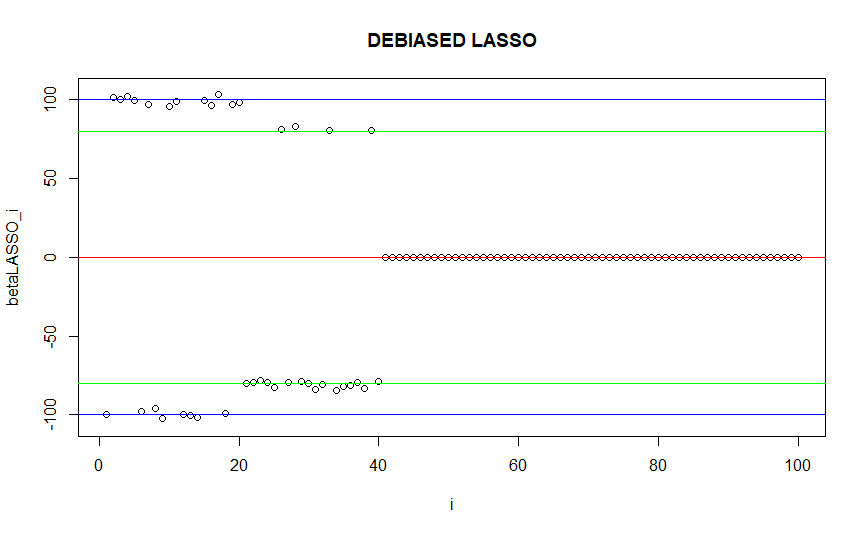}
			\end{subfigure}
			\hfill
			\begin{subfigure}{.4\textwidth}
				\centering
				% include fourth image
				\includegraphics[scale=0.3]{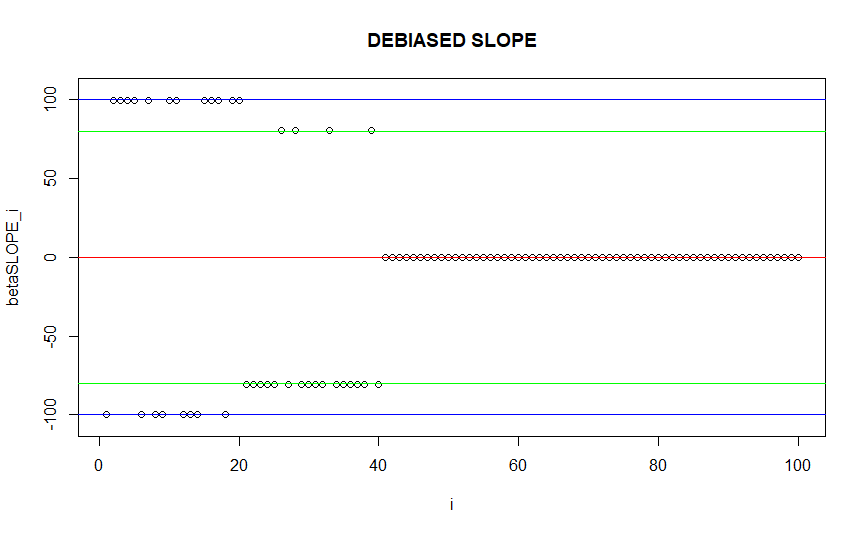}
			\end{subfigure}
			\caption{Pattern recovery by debiased LASSO (left image) and by debiased SLOPE (right image) in the same setting as above.}
			\label{fig:vs}
		\end{figure}
		
		We also compare debiased SLOPE with debiased LASSO, as shown in Figure~\ref{fig:vs}. The horizontal lines correspond to the true values of $\bbeta$. As~one~may~observe, in the presented setting LASSO does not recover the true support, while debiased SLOPE perfectly recovers support, sign and clusters.  
		% %montecarlo
		% \begin{table}{Comparison of MSE between different regression methods.}{tab:1}
			% \begin{tabular}{|c|c|c|c|c|}\hline
				% & OLS & LASSO-CV & LASSO-LS & SLOPE-LS \\ \hline
				% $MSE(\bbeta,\cdot)$ & 605.2787 & 460.8006 & 242.5294 & 12.26808\\ \hline
				% \end{tabular}
			% \end{table}
		
		%original: no montecarlo
		\begin{table}
			\centering
			\begin{tabular}{|c|c|c|c|c|}\hline
				& OLS & LASSO-CV & LASSO-LS & SLOPE-LS \\ \hline
				$MSE(\bbeta,\cdot)$ & 613.6797 & 426.3705 & 171.7957 & 20.74967\\ \hline
			\end{tabular}
			\caption{Comparison of MSE between different regression methods.}
			\label{tab:1}
		\end{table}
		%%%%%%%%%%%%%
		\appendix
		\section{Appendix}\label{appendix}
		\begin{proof}[Proof of Lemma {\ref{lemma 2 bis}}] Since the matrix $(\bX'\bX)^{-1}$ is nonnegative definite, it follows that the function $g :C_{\mathbf \Lambda} \rightarrow [0,\infty)$   defined by
			$$
			g(\bpi):= (\bX'\bY - \bpi)'(\bX'\bX)^{-1}(\bX'\bY - \bpi)
			$$
			is convex in $\bpi$. Therefore, at the point $\bpi^*=(\pi_1^*,\ldots,\pi_p^*)'$, where $g$ attains its~global minimum over $C_{\mathbf \Lambda}$, the gradient $\nabla g$ of $g$ satisfies
			\begin{equation*} \label{gradient inequality} \nonumber
				\left[\,\nabla g(\bpi^*) \,\right]'(\bpi-\bpi^*) \ge 0, \quad \mbox{\rm for all } \; \bpi \in C_{\mathbf \Lambda}.
			\end{equation*}
			This implies $(\bpi-\bpi^*)' \bbeta^*   \leq  0$,  for all  $\bpi \in C_{\mathbf \Lambda}$,  because 
			$$
			\nabla g(\bpi^*)=-2(\bX'\bX)^{-1}(\bX'\bY - \bpi^*)=-2\bbeta^*.
			$$
			In the proof of parts (a), (b) and (c) we use the fact that $\bpi^*$ maximizes $\bpi' \bbeta^*$ over $\bpi \in C_{\mathbf \Lambda}$.\\
			To prove part (a)  suppose  that $\mbox{\rm sign}(\beta_i^*) \cdot \mbox{\rm sign}(\pi_i^*) < 0$ for some $i$ and define $$\bpi=(\pi_1^*,\ldots,\pi_{i-1}^*,-\pi_{i}^*,\pi_{i+1}^*,\ldots,\pi_{p}^* )'.$$ Then we have $(\bpi^*)' \bbeta^*   <  \bpi' \bbeta^*$,   
			which is impossible since $ \bpi \in C_{\mathbf \Lambda}$.\\  
			To prove part (b), consider a permutation $\tau$ of $(1,2,\ldots,n)$ such that\\ $(|\pi_{\tau(1)}^*|,\ldots,|\pi_{\tau(p)}^*|)$ and $(|\beta_1^*|,\ldots,|\beta_p^*|)$ are similarly sorted. Define the point\\ 
			$\bpi=(s_1 \cdot \pi^*_{\tau(1)}, s_2 \cdot \pi^*_{\tau(2)},\ldots,s_p \cdot \pi^*_{\tau(p)})$, where $s_i=\mbox{\rm sign}(\beta_i^*)$, for $i=1,2,\ldots,p$. If  $(|\pi_{\tau(1)}^*|,\ldots,|\pi_{\tau(p)}^*|) \neq (|\pi_1^*|,\ldots,|\pi_p^*|)$, then, by~the~Hardy-Littlewood-P\'{o}lya rearrangement inequality,%\tsmm{= rearrangement ineuqality, same name convention as in the supplement to~\cite{MBslope}.}
			$$
			\bpi' \bbeta^*=\sum_{i=1}^p |\pi^*_{\tau(i)}||\bbeta_i^*|>  \sum_{i=1}^p |\pi^*_i||\bbeta_i^*| \ge  (\bpi^*)' \bbeta^*,
			$$
			which is impossible since $\bpi \in C_{\mathbf \Lambda}$.
			\\
			Finally, to prove part (c), suppose that $\sum_{i=1}^{k-1} |\pi^*_{\tau(i)}|< \sum_{i=1}^{k-1} \lambda_i$,  and that  $|\pi^*_{\tau(k)}|>0$. In~this case there is a sufficiently small $\delta>0$, such that
			$$ 
			\bpi=(\pi_1^*,\ldots,\pi_{i-2}^*,\pi_{i-1}^*+\delta s_{i-1},\pi_{i}^*-\delta s_i,\pi_{i+1}^*,\ldots,\pi_{p}^* )' \in C_{\mathbf \Lambda}.
			$$
			If $|\beta^*_{\tau(k-1)}|>|\beta^*_{\tau(k)}|$  then
			$$
			\bpi' \bbeta^*=(\bpi^*)' \bbeta^*+\delta(|\beta^*_{\tau(k-1)}|-|\beta^*_{\tau(k)}|)> (\bpi^*)' \bbeta^*,
			$$
			which is impossible.
		\end{proof}
		
		\begin{proof}[Proof of Lemma {\ref{lemma 3}}]  At first we note that for all $\bpi \in C_{\mathbf \Lambda}$
			\begin{eqnarray*}
				r(\bbeta^*,\bpi) &=& \frac{1}{2}\|\bY-\bX\bbeta^* \|_2^2+\bpi' \bbeta^*=\frac{1}{2}\|\bY-\bX\bbeta^* \|_2^2+ (\bpi^*)' \bbeta^* \\
				& & + (\bpi -\bpi^*) ' \bbeta^*= r(\bbeta^*,\bpi^*)+(\bpi -\bpi^*) ' \bbeta^* \le r(\bbeta^*,\bpi^*), 
			\end{eqnarray*}
			where the last inequality follows from the fact that $(\bpi-\bpi^*)' \bbeta^*   \leq  0$,  for all  $\bpi~\in~C_{\mathbf \Lambda}$, see the proof of~\ref{lemma 2 bis}. 
			Therefore, $\displaystyle 
			\max_{{\mathbf \pi} \in C_{\mathbf \Lambda}} \; r(\bbeta^*,\bpi)=r(\bbeta^*,\bpi^*)$. Moreover, from the definition of~the~point $\bbeta^*$ it is seen that $\displaystyle r(\bbeta^*,\bpi^*) =\min_{{\mathbf \beta} \in {\cal M}}\; r(\bbeta,\bpi^*) $. These two facts imply that
			\begin{eqnarray*}
				\min_{{\mathbf \beta} \in {\cal M}} \max_{{\mathbf \pi} \in C_{\mathbf \Lambda}} \; r(\bbeta,\bpi) &\leq & \max_{{\mathbf \pi} \in C_{\mathbf \Lambda}} \; r(\bbeta^*,\bpi) = r(\bbeta^*,\bpi^*)\\ & = &\min_{{\mathbf \beta} \in {\cal M}}\; r(\bbeta,\bpi^*) \leq \max_{{\mathbf \pi} \in C_{\mathbf \Lambda}} \min_{{\mathbf \beta} \in {\cal M}}\; r(\bbeta,\bpi).
			\end{eqnarray*} 
			Since  $\displaystyle \max_{{\mathbf \pi} \in C_{\mathbf \Lambda}} \min_{{\mathbf \beta} \in {\cal M}}\; r(\bbeta,\bpi) \leq \min_{{\mathbf \beta} \in {\cal M}} \max_{{\mathbf \pi} \in C_{\mathbf \Lambda}} \; r(\bbeta,\bpi)$ (by the max-min inequality),  we~have the~equality throughout.
			This completes the proof. 
		\end{proof}
		
		\begin{proof}[Proof of Lemma {\ref{lemat 1}}] 
			Observe that
			\begin{eqnarray*}
				\| \bY - \bX\bb \|_2^{2}&=& \bY'\bY - 2\bY'\bX\bb + \bb'\bb\\
				\|\widehat{\bbeta}^{\ols}-\bb\|_2^{2}&=& \bY'\bX\bX'\bY - 2\bY'\bX\bb + \bb'\bb.%\qedhere
			\end{eqnarray*}
			Therefore both optimization problems differ by $\frac{1}{2}(\bY'\bY - \bY'\bX\bX'\bY)$, which does not depend on $\bb$, which implies their equivalence.
		\end{proof}
		
		% \section*{Acknowledgements}

		%\mmts{Bibliography: To keep only the used positions (probably with one command). To edit the visible etiquettes. TO ZALEŻY OD CZASOPISMA.}
		
		%%%%%%%%%%%%%%%%%%%%%%%%%%
		%	\newpage
		%%%%%%%%%%%%%%%%%%%%%%%%%
		\bibliographystyle{plain}

\begin{thebibliography}{99}                  
			%If there are more than 9 items use instead of \begin{thebibliography}{9}
				%the following \begin{thebibliography}{99}
					\bibitem{aubin}  {J. P. Aubin}, {\it Mathematical Methods of Game and
						Economic Theory.} North-Holland Publishing Company, 1980.
					
					\bibitem{MBslope}
					{M. Bogdan, E. van~den~Berg, C. Sabatti, W. Su and E.~J. Cand{\`e}s}, 
					{\it SLOPE -- Adaptive Variable Selection Via Convex Optimization}, 
					Ann. Appl. Stat. 9 (2015), 1103-1140.
					
					\bibitem{MBtheory}
					{M. Bogdan, E. van~den~Berg, W. Su and E.~J. Cand{\`e}s}, 
					\textit{Statistical estimation and testing via the sorted L1 norm}, arXiv:1310.1969 (2013).
					
					\bibitem{modrec}
					{M. Bogdan, X. Dupuis, P. Graczyk, B. Ko\l{}odziejek, T. Skalski, P. Tardivel and M. Wilczy\'nski}, \textit{Pattern recovery by SLOPE}, arXiv:2203.12086 (2022).
					
					\bibitem{bondell2008}
					{H. D. Bondell and B. J. Reich}, \textit{Simultaneous regression shrinkage, variable selection, and supervised clustering of predictors with OSCAR}, Biometrics, 64 (2008), no. 1, 115-123.
					
					\bibitem{bondell2009}
					{H. D. Bondell, B. J. Reich}. Simultaneous factor selection and collapsing levels in ANOVA. Biometrics 65.1 (2009), no. 1, 169--177.
					
					\bibitem{chen} {S. Sh. Chen and D. L. Donoho},
					\textit{Basis pursuit}, Proceedings of 1994 28th Asilomar Conference on Signals, Systems and Computers, IEEE 1 (1994), 41-44.
					
					\bibitem{cds}
					{S. Sh. Chen, D. L. Donoho and M. A. Saunders}, \textit{Atomic decomposition by basis pursuit}, SIAM J. Sci. Comput. 20 (1998), no. 1, 33--61.
					
					\bibitem{prox}
					{X. Dupuis and P. Tardivel}, \textit{Proximal operator for the sorted $l_1$ norm: Application to testing procedures based on SLOPE}, hal-03177108v2 (2021).
					
					\bibitem {ewald}  
					{K. Ewald and U. Schneider}, \textit{Uniformly Valid Confidence Sets Based on the {L}asso}, Electron. J. Stat. 12 (2018), no. 12, 1358--1387.
					
					\bibitem{nowak}
					{M. AT Figueiredo and R. Nowak}, \textit{Ordered weighted l1 regularized regression with strongly correlated covariates: Theoretical aspects}, {Artificial Intelligence and Statistics}, PMLR (2016), 930-938.
					
					\bibitem{gertheiss}
					{J. Gertheiss and G. Tutz}, \textit{Sparse modeling of categorial explanatory variables}, Ann. Appl. Stat. 4 (2010), no. 4, 2150--2180. 
					
					\bibitem{kremer} {P. Kremer, D. Brzyski, M. Bogdan and S. Paterlini},
					\textit{Sparse index clones via the sorted $\ell^1$-norm},  SSRN 3412061 (2019).
					
					\bibitem{majkanska}
					{A. Maj-Ka\'nska, P. Pokarowski and A. Prochenka}, \textit{Delete or merge regressors for linear model selection}, Electron. J. Stat. 9 (2015), no. 2, 1749--1778. 
					
					\bibitem{minami}
					{K. Minami}, \textit{Degrees of freedom in submodular regularization: a computational perspective of Stein's unbiased risk estimate}, J. Multivariate Anal. 175 (2020), 104546.
					
					\bibitem {negrinho}  {R. Negrinho and A. D. Martins}, \textit{Orbit Regularization}, NIPS (2014).
					
					\bibitem{pokaretal}
					{Sz. Nowakowski, P. Pokarowski and W. Rejchel}, \textit{Group Lasso merger for sparse prediction with high-dimensional categorical data}, arXiv:2112.11114 (2021).
					
					\bibitem{oelker}
					{M.-R. Oelker, J. Gertheiss and G. Tutz}, \textit{Regularization and model selection with categorical predictors and effect modifiers in generalized linear models}, Stat. Model. 14 (2014), no. 2, 157--177. 
					
					\bibitem{signalON}
					{K. R. Rao, N. Ahmed, M. A. Narasimhan}, \textit{Orthogonal transforms for digital signal processing}, Proceedings of the Eighteenth Midwest Symposium on Circuits and Systems (1975), 1--6. 
					
					\bibitem{geom1} {U. Schneider and P. Tardivel}, \textit{The geometry of uniqueness, sparsity and clustering in penalized estimation}, arXiv:2004.09106 (2020).
					
					\bibitem{simple}
					{P. Tardivel, R. Servien and D. Concordet},
					\textit{Simple expression of the LASSO and SLOPE estimators in low-dimension}, Statistics 54 (2020), 340-352.
					
					\bibitem {geom2}  
					{P. Tardivel, T. Skalski, P. Graczyk and U. Schneider}, \textit{The Geometry of Model Recovery by Penalized and Thresholded Estimators}, hal-03262087 (2021).
					
					\bibitem{scope}
					{B. G. Stokell, R. D. Shah and R. J. Tibshirani}, \textit{Modelling High-Dimensional Categorical Data Using Nonconvex Fusion Penalties}, arXiv:2002.12606 (2021).
					
					\bibitem{tibshirani1996regression}
					{R. Tibshirani}, \textit{Regression shrinkage and selection via the lasso}. J. R. Stat. Soc. Ser. B. Stat. Methodol 101 (1996), no. 1, 167-188.
					
					\bibitem{zeng2014}
					{X. Zeng and M. AT Figueiredo}, \textit{Decreasing Weighted Sorted $l_1$ Regularization}, IEEE Signal Processing Letters 21 (2014), no. 10, 1240-1244.
					
					\bibitem{zhao}
					{P. Zhao and B. Yu}, \textit{On model selection consistency of {L}asso}, J. Mach. Learn. Res. 7 (2006), 2541-2563.
					
					\bibitem{zou}
					{H. Zou}, \textit{The adaptive lasso and its oracle properties}, J. Amer. Statist. Assoc. 101 (2006), no. 476, 1418-1429.
				\end{thebibliography}
		
			\end{document}